\def\Ddots{\mathinner{\mkern1mu\raise\p@
\vbox{\kern7\p@\hbox{.}}\mkern2mu
\raise4\p@\hbox{.}\mkern2mu\raise7\p@\hbox{.}\mkern1mu}}
\def\Xint#1{\mathchoice
{\XXint\displaystyle\textstyle{#1}}%
{\XXint\textstyle\scriptstyle{#1}}%
{\XXint\scriptstyle\scriptscriptstyle{#1}}%
{\XXint\scriptscriptstyle\scriptscriptstyle{#1}}%
\!\int}
\def\XXint#1#2#3{{\setbox0=\hbox{$#1{#2#3}{\int}$}
\vcenter{\hbox{$#2#3$}}\kern-.5\wd0}}
\def\dashint{\Xint-}
\newtheorem{theorem}{Theorem}[section]
\newtheorem{corollary}[theorem]{Corollary}
\newtheorem{lemma}[theorem]{Lemma}
\newtheorem{remark}[theorem]{Remark}
\numberwithin{equation}{section}
\def\R{\mathbb R}
\def\Z{\mathbb Z}
\def\bey{\begin{eqnarray*}}
\def\eey{\end{eqnarray*}}
\DeclareMathOperator{\supp}{supp}
\def\D{{\mathscr D}}
\newcommand\de{\delta}
\newcommand\s{\sigma}
\title[Logarithmic bump conditions on spaces of homogeneous type]{Logarithmic bump
  conditions for Calder\'on-Zygmund Operators on spaces of homogeneous type}
\author{Theresa C. Anderson, David Cruz-Uribe, SFO and Kabe Moen }
\address{Theresa C. Anderson, Department of Mathematics, Brown
  University, Providence, RI 02912, USA}
\email{theresa\_anderson@brown.edu}
\address{David Cruz-Uribe, SFO, Department of Mathematics, Trinity
  College, Hartford, CT 06106, USA}
\email{david.cruzuribe@trincoll.edu}
\address{Kabe Moen, Department of Mathematics, University of Alabama,
  Tuscaloosa, AL 35487, USA}
\email{kabe.moen@ua.edu}
\thanks{The first author is supported by an NSF graduate fellowship.
The second author is supported by the Stewart-Dorwart faculty
  development fund at Trinity College and by
  grant MTM2009-08934 from the Spanish Ministry of Science and
  Innovation.  The third author is supported by NSF Grant 1201504}
\subjclass[2010]{42B25, 42B30, 42B35}
\keywords{Calder\'on-Zygmund operators, two weight inequalities, bump
  conditions, spaces of homogeneous type, dyadic operators}
\date{August 2, 2013}
\begin{document}
\maketitle

\begin{abstract}
  We establish two-weight norm inequalities for singular integral
  operators defined on spaces of homogeneous type.  We do so first
  when the weights satisfy a double bump condition and then when the
  weights satisfy separated logarithmic bump conditions.  Our results
  generalize recent work on the Euclidean case, but our proofs are
  simpler even in this setting. The other interesting feature of our
  approach is that we are able to prove the separated bump results
  (which always imply the corresponding double bump results) as a
  consequence of the double bump theorem.
\end{abstract}

\section{Introduction and main results}
\label{sec:introduction}

Given a Calder\'on-Zygmund singular integral $T$, the problem of finding
sufficient conditions on a pair of weights $(u,\sigma)$ such that the
two-weight norm inequality
\begin{equation}\label{eqn:intro-norm}
 \int_{\R^n} |T(f\sigma)(x)|^pu(x)\,dx \leq C\int_{\R^n}
|f(x)|^p\sigma(x)\,dx
\end{equation}
holds dates back to the 1970s. Significant progress has only
been made in the past twenty years:  for a brief history,
see~\cite[Chapter~1]{MR2797562}.  One approach to this problem is to
use the so-called $A_p$-bump conditions introduced by
P\'erez~\cite{perez94,perez95}.  It was conjectured that a sufficient
condition for~\eqref{eqn:intro-norm} to hold is that the pair
$(u,\sigma)$ satisfies
\begin{equation} \label{eqn:double-bump}
\sup_Q\|u^{1/p}\|_{A,Q} \|\sigma^{1/p'}\|_{B,Q} < \infty,
\end{equation}
where the supremum is taken over all cubes in $\R^n$, $A$ and $B$ are
Young functions that satisfy the growth conditions $\bar{A} \in
B_{p'}$ and $\bar{B}\in B_p$, and $\|\cdot\|$ is a normalized Orlicz
norm.  (For precise definitions, see 
below.)  This problem proved quite difficult, and a number of partial
results were proved \cite{cruz-uribe-martell-perezP,dcu-martell-perez,
  cruz-uribe-perez00b,cruz-uribe-perez02} before the full result
was proved by Lerner~\cite{Lern2012} and by Nazarov, Reznikov and
Volberg~\cite{NRVbump} (when $p=2$).  Much of the recent progress on
this problem was due to the close connection with the $A_2$ conjecture
on sharp one-weight norm inequalities for singular
integrals---see~\cite{dcu-martell-perez} for details.

Recently, it was noted~\cite{CRV2012} that while the conjecture was originally stated in
terms of the ``double bump'' condition~\eqref{eqn:double-bump},
it was motivated by the so-called Muckenhoupt-Wheeden conjectures
(see~\cite{CRV2012} and~\cite[Section~9.2]{MR2797562})
and
that implicit in this motivation was a weaker conjecture in terms of a
pair of ``separated bump'' conditions:  $T$
satisfies~\eqref{eqn:intro-norm} if the pair $(u,\sigma)$ satisfy
\begin{equation} \label{eqn:sep-bump}
\sup_Q\|u^{1/p}\|_{A,Q} \|\sigma^{1/p'}\|_{p',Q} < \infty, \qquad
\sup_Q\|u^{1/p}\|_{p,Q} \|\sigma^{1/p'}\|_{B,Q} < \infty.
\end{equation}
In~\cite{CRV2012} this conjecture was proved in the special case
when $A$ and $B$ are ``log bumps'':  i.e.,
$A(t)=t^p\log(e+t)^{p-1+\delta}$ and
$B(t)=t^{p'}\log(e+t)^{p'-1+\delta}$, $\delta>0$.
A simpler proof, one which also gives quantitative estimates on the
constants for separated bumps, was found by Hyt\"onen and
P\'erez~\cite{hytonen-perezP2013}.   The exact value of the constants
is important, since 
Hyt\"onen~\cite{hytonen-2012} has shown that if the
sharp constants for the separated bump condition are as conjectured, 
then as an immediate  corollary this result yields a new proof of the
sharp $A_p$-$A_\infty$ bounds for singular
integrals~\cite{hytonen-lacey-IUMJ,hytonen-perez-analPDE}. 

\begin{remark}
  It has generally been accepted that the separated bump condition is
  weaker than the double bump condition, but no explicit pair $(u,v)$
  that satisfies~\eqref{eqn:sep-bump}  but not \eqref{eqn:double-bump}
  for a given pair of Young functions $A,\,B$ has
  appeared in the literature.  We rectify this by constructing an
  example in Section~\ref{section:example} below.
\end{remark}

\medskip

The goal of this paper is to extend the double bump and separated bump
results discussed above to the case of singular integrals on spaces of
homogeneous type.  These spaces are of interest since they often arise
in applications: see for example~\cite{MR1104656,MR0499948,MR0447954,
  MR2033231,MR1962076}.  Many of the tools of classical harmonic
analysis on Euclidean spaces generalize to this setting; nevertheless
there are substantive differences and some care must be taken to
insure that proofs still hold.  Our arguments differ extensively from
those in~\cite{CRV2012}:    they have more in common with the approach
taken in~\cite{hytonen-perezP2013}.    Our proof, when restricted to
the Euclidean case is somewhat simpler than theirs, but we do not 
prove the same quantitative estimates on the constants.    A
very interesting feature of our proof is that we are able to prove the
separated results as a consequence of the double bump estimates.

Before we can state our main results we need to make a number of
definitions.  By a space of homogeneous type (hereafter, SHT) we mean
an ordered triple $(X,\rho,\mu)$ where $X$ is a set, $\rho$ is a
quasimetric on $X$, and $\mu$ is a non-negative Borel measure on $X$ that is
doubling:
\[\mu(B_\rho(x_0,2r))\leq C_d\mu(B_\rho(x_0,r)),\] 
where $B_\rho(x_0,r) = \{ x\in X : \rho(x,x_0)<r\}$.  The smallest such
constant  $C_d$ is called the doubling constant of $\mu$.  We also assume that
$\mu$ is non-trivial, i.e., for every ball, $0<\mu(B_\rho(x_0,r))<\infty$.  For further
details, see Christ~\cite{MR1104656} or Coifman and Weiss~\cite{MR0499948}.

\begin{remark}
For brevity, hereafter we will say that a constant depends on $X$ and
write $C(X,\ldots)$ if the constant depends on the triple
$(X,\rho,\mu)$. 
\end{remark}

\smallskip

A function $K:X\times X\setminus\{(x,x)\}\to \R$ is a Calder\'on-Zygmund
kernel if there exist $\eta>0$ and $C<\infty$ such that for all
$x_0\neq y\in X$ and $x\in X$ it satisfies the decay condition:
\begin{equation*}
\label{decay}
|K(x_0,y)|\leq \frac{C}{\mu(B_\rho(x_0,\rho(x_0,y)))}
\end{equation*}
and the smoothness condition: for $\rho(x_0,x)\leq \eta\rho(x_0,y)$,
\begin{equation*}
\label{smoothness}
|K(x,y)-K(x_0,y)|+|K(y,x)-K(y,x_0)|\leq C\left(\frac{\rho(x,x_0)}{\rho(x_0,y)}\right)^{\eta}
\frac{1}{\mu(B_\rho(x_0,\rho(x_0,y)))}.
\end{equation*}
An operator $T$ is associated with a Calder\'on-Zygmund kernel $K$ if for every
$f\in C_0^\infty(X)$, 
\[ Tf(x) = \int_X f(y)K(x,y)\,d\mu(y),  \qquad x\not\in \supp(f). \]
If $T$ is bounded on $L^2(X,\mu)$, then $T$ is referred to as a
Calder\'on-Zygmund operator.

\medskip

The bump conditions discussed above are given in terms of Orlicz
norms.  Here we summarize some of the basic properties we need; for
the general theory of Orlicz spaces, see Rao and Ren \cite{rao-ren91}
or~\cite[Chapter 5]{MR2797562}.
A Young function is a continuous, convex, increasing function $A
:[0,\infty)\to[0,\infty)$ such that $A(0) = 0$ and $A(t)/t\rightarrow
\infty$ as $t\rightarrow \infty$.  It is often convenient to assume
that $A(1)=1$ but this is not strictly necessary.  Note that $A(t)=t$
is not a Young function though $t^p$ is for $p>1$.  However, in many
cases results for Young functions hold in this limiting case.  The
Young functions we are interested in are referred to as log
bumps: $A(t)=t^p\log(e+t)^{p-1+\delta}$, $\delta>0$.

Given two Young functions $A$ and $B$, we write that $A\lesssim B$ if
there exists constants $c,\,t_0>0$ such that $A(t)\leq B(ct)$ for all
$t\geq t_0$.   Note that given any Young function $A$, $t\lesssim A(t)$. 
We will write $A\approx B$ if there exists $c_1,\,c_2,t_0>0$ such that 
$c_1 A(t) \leq B(t) \leq c_2A(t)$ for all $t\geq t_0$.

Given a Young function $A$ and a set $E$ such that $0<\mu(E)<\infty$, define
the Orlicz space norm 
\[\|v\|_{A,E} =
\inf\left\{\lambda>0:\dashint_EA\bigg{(}\frac{|v(x)|}
{\lambda}\bigg{)}d\mu(x)
  \leq 1\right\},\]
where $\dashint_E = \mu(E)^{-1}\int_E$.  If $A(t)=t^p$, $1\leq p<\infty$, then 
\[ \|v\|_{A,E} = \left(\dashint_E  |v|^p\,d\mu\right)^{1/p} = \|v\|_{p,E}. \]
If $A\lesssim B$, then there exists a constant $C$ such that 
$\|f\|_{A,E} \leq C\|f\|_{B,E}$.  

Given a Young function $A$, define $\bar{A}$, the complementary 
function, by
\[ \bar{A}(t) = \sup_{s>0} \{ st - A(s)\}. \]
It can be shown that $\bar{A}$ is also a Young function.  Given $A$,
we have the generalized H\"older's inequality: for any set $E$,
$0<\mu(E)<\infty$,
\[ \dashint_E |f(x)g(x)|\,d\mu(x) \leq 2\|f\|_{A,E}\|g\|_{\bar{A},E}. \]
More generally, given three Young functions $A,\,B,\,C$ such that
\[ B^{-1} C^{-1}(t)\leq c A^{-1}(t), \]
then there exists a constant $K$ such that
\[ \|fg\|_{A,E} \leq K \|f\|_{B,E}\|g\|_{C,E}. \]

Given $1<p<\infty$ we define the class $B_p$: a Young function $A\in
B_p$ if 
\[ [A]_{B_p} = \int_1^{\infty}\frac{A(t)}{t^p}\frac{dt}{t}<\infty.\]
In the special case of log bumps, if $A(t)=t^p\log(e+t)^{p-1+\delta}$
then $\bar{A}(t) \approx t^{p'}\log(e+t)^{-1-(p'-1)\delta}$,
and so $\bar{A}\in B_{p'}$.  

We can now define our bump conditions.  Given  Young
functions $A$ and $B$,  and a pair of
weights $(u,\sigma) $, define
\[ [u,\sigma]_{A,B,p} = \sup_{B_\rho}\|u^{1/p}\|_{A,B_\rho}
\|\sigma^{1/p'}\|_{B,B_\rho}, \]
and
\[
[u,\sigma]_{A,p} = \sup_{B_\rho}\|u^{1/p}\|_{A,B_\rho}
\|\sigma^{1/p'}\|_{p',B_\rho},
\]
where the suprema are taken over all balls $B_\rho$ in $X$.
Note that by symmetry we have that if $B$ is another Young function,
then
\[ [\sigma,u]_{B,p'} = \sup_{B_\rho}
\|u^{1/p}\|_{p,B_\rho}\|\sigma^{1/p'}\|_{B,B_\rho}.
\]

\smallskip

By weights $u$ and $\sigma$ we always mean non-negative measurable
functions on $X$ that are finite almost everywhere and positive on
sets of positive measure.   Many authors assume that weights are
locally integrable; however,  when working with bump conditions this
assumption can be avoided by an approximation argument.  As was shown
in~\cite[Section~7.2]{MR2797562}, we can always assume that $u$ and
$\sigma$ are bounded and bounded away from $0$ on $X$, provided that
in the norm inequality being proved we are working with a function
$f\in \cap_{p>1} L^p(X,\mu)$: for example, $f$ is a bounded function of
compact support.   

\begin{remark}
Since bounded functions of compact support are dense in any weighted
space $L^p(X,u)$, we will hereafter assume that $u$, $\sigma$ and $f$
satisfy these conditions.  Moreover, since $T$ is linear we will also
assume without loss of generality that $f$ is non-negative.
\end{remark}

\bigskip

We can now state our main results.  The first generalizes the double
bump condition to SHT.

\begin{theorem} \label{thm:double}
Given an SHT $(X,\rho,\mu)$, suppose the pair of weights $(u,\sigma)$
satisfies $[u,\sigma]_{A,B,p}<\infty$, where $\bar{A}\in B_{p'}$ and
$\bar{B}\in B_p$.  
Then  a Calder\'on-Zygmund operator $T$ satisfies the strong type
inequality
 \[\|T(f\s)\|_{L^{p}(u)}\leq 
C(T,X)[u,\s]_{A,B,p}[\bar{A}]_{B_{p'}}^{1/p'}[\bar{B}]_{B_p}^{1/p}\|f\|_{L^p(\s)}.\]
\end{theorem}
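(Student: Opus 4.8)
The plan is to follow the now-standard reduction of a Calder\'on--Zygmund operator to dyadic model operators, carried out in the setting of a space of homogeneous type, and then prove the bump estimate for the dyadic operators. First I would invoke the existence of a (finite, or countable) family of adjacent dyadic systems $\{\mathscr D^k\}$ on $(X,\rho,\mu)$ — due to Hyt\"onen--Kairema in the SHT setting — together with the pointwise/sparse domination principle: for $f$ bounded with compact support, $|Tf|$ is controlled by a finite sum over these dyadic grids of sparse (Carleson) operators
\[
 \mathcal A_{\mathscr S} g = \sum_{Q\in\mathscr S} \Big(\dashint_Q g\,d\mu\Big)\mathbf 1_Q,
\]
where $\mathscr S\subset\mathscr D^k$ is a sparse family. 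Thus it suffices to prove
\[
 \|\mathcal A_{\mathscr S}(f\sigma)\|_{L^p(u)}\le C(X)\,[u,\sigma]_{A,B,p}[\bar A]_{B_{p'}}^{1/p'}[\bar B]_{B_p}^{1/p}\,\|f\|_{L^p(\sigma)}
\]
uniformly over sparse families $\mathscr S$ in a fixed grid. I would remark that all the ingredients (Besicovitch-type covering via dyadic cubes, the generalized H\"older inequality, the doubling property) transfer to SHT with constants depending only on $(X,\rho,\mu)$.

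Next I would dualize: testing against a nonnegative $h\in L^{p'}(u)$ with $\|h\|_{L^{p'}(u)}=1$, the problem reduces to bounding
\[
 \sum_{Q\in\mathscr S}\Big(\dashint_Q f\sigma\,d\mu\Big)\Big(\dashint_Q h u\,d\mu\Big)\mu(Q).
\]
On each $Q$, apply the generalized H\"older inequality of the excerpt twice: $\dashint_Q f\sigma\,d\mu \le 2\|f\sigma^{1/p}\|_{\bar B,Q}\|\sigma^{1/p'}\|_{B,Q}$ and $\dashint_Q hu\,d\mu\le 2\|hu^{1/p'}\|_{\bar A,Q}\|u^{1/p}\|_{A,Q}$. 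The product of the two bump factors is absorbed into $[u,\sigma]_{A,B,p}$, leaving
\[
 [u,\sigma]_{A,B,p}\sum_{Q\in\mathscr S}\|f\sigma^{1/p}\|_{\bar B,Q}\,\|hu^{1/p'}\|_{\bar A,Q}\,\mu(Q).
\]
Using sparseness, pick for each $Q$ its pairwise-disjoint major subset $E_Q\subset Q$ with $\mu(E_Q)\ge\tfrac12\mu(Q)$, so $\mu(Q)\le 2\mu(E_Q)$, and apply H\"older's inequality in the index $p$ over the collection $\{E_Q\}$:
\[
 \sum_Q \|f\sigma^{1/p}\|_{\bar B,Q}\|hu^{1/p'}\|_{\bar A,Q}\mu(E_Q)
 \le \Big(\sum_Q \|f\sigma^{1/p}\|_{\bar B,Q}^p\,\mu(E_Q)\Big)^{1/p}\Big(\sum_Q \|hu^{1/p'}\|_{\bar A,Q}^{p'}\mu(E_Q)\Big)^{1/p'}.
\]

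The heart of the matter is then the two Carleson-embedding estimates
\[
 \sum_{Q\in\mathscr S}\|f\sigma^{1/p}\|_{\bar B,Q}^p\,\mu(E_Q)\le C(X)\,[\bar B]_{B_p}\,\|f\|_{L^p(\sigma)}^p,
 \qquad
 \sum_{Q\in\mathscr S}\|hu^{1/p'}\|_{\bar A,Q}^{p'}\mu(E_Q)\le C(X)\,[\bar A]_{B_{p'}}\,\|h\|_{L^{p'}(u)}^{p'}.
\]
This is where the hypotheses $\bar A\in B_{p'}$ and $\bar B\in B_p$ enter, and it is the main obstacle. I would prove it via the standard pointwise bound: for the Orlicz maximal function $M_{\bar B}g=\sup_Q\|g\|_{\bar B,Q}\mathbf 1_Q$ on SHT one has, by the $B_p$ condition, $\|M_{\bar B}g\|_{L^p(\mu)}\le C(X)[\bar B]_{B_p}^{1/p}\|g\|_{L^p(\mu)}$ (the SHT version of P\'erez's theorem, proved by the usual Calder\'on--Zygmund decomposition on a dyadic grid — the doubling hypothesis supplies all needed covering lemmas). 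Since $\|f\sigma^{1/p}\|_{\bar B,Q}^p\mu(E_Q)\le\int_{E_Q}(M_{\bar B}(f\sigma^{1/p}))^p\,d\mu$ and the $E_Q$ are disjoint, summing gives the first embedding with $g=f\sigma^{1/p}$; the second is identical with $(\bar A,p',u,h)$ in place of $(\bar B,p,\sigma,f)$. Combining the two embeddings with the H\"older step and the definition of $[u,\sigma]_{A,B,p}$ yields the claimed bound, with the exponents $1/p$ on $[\bar B]_{B_p}$ and $1/p'$ on $[\bar A]_{B_{p'}}$ appearing exactly as stated; the passage from the sparse operators back to $T$ uses that $f$ is bounded with compact support and the approximation reductions already granted in the excerpt.
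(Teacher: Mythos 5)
Your proposal is correct and follows essentially the same route as the paper: reduce to sparse dyadic operators via the Hyt\"onen--Kairema dyadic systems and Lerner-type domination, dualize, apply the generalized H\"older inequality twice against the double-bump constant, use sparseness to replace $\mu(Q)$ by $\mu(E_Q)$, and conclude by the $B_p$-boundedness of the Orlicz maximal operator (Lemma~\ref{lemma:orlicz-max}). The only cosmetic difference is that you apply discrete H\"older to the sum and then two Carleson embeddings, whereas the paper first passes pointwise to the maximal functions under the integral and applies continuous H\"older there; the two bookkeeping orders give the same bound.
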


\medskip

The next two results give separated bump conditions for weak and
strong type inequalities.  

\begin{theorem}
\label{thm:main}
Given an SHT $(X,\rho,\mu)$, suppose the pair of weights $(u,\sigma)$
is such that $[u,\sigma]_{A,p} <\infty$, where $A(t) =
t^plog(e+t)^{p-1+\de}$.
Then  a Calder\'on-Zygmund operator $T$ satisfies the weak type
inequality
 \[\|T(f\s)\|_{L^{p,\infty}(u)}\leq C(T,X,p,\delta)[u,\s]_{A,p}\|f\|_{L^p(\s)}.\]
\end{theorem}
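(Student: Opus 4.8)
The plan is to reduce the weak-type estimate in Theorem~\ref{thm:main} to the double bump estimate of Theorem~\ref{thm:double} via a standard duality/Calder\'on-Zygmund-decomposition argument, which is where the ``separated implies double'' philosophy of the paper comes in. First I would recall that a weak-type $(p,p)$ bound is equivalent (up to constants) to the testing-type inequality
\[
  \Big(\int_E |T(f\sigma)|^p\, u\, d\mu\Big)^{1/p} \leq C\, [u,\sigma]_{A,p}\, \|f\|_{L^p(\sigma)}
\]
being upgraded to a bound on $u(E)^{1/p}$ for all measurable $E$; more precisely, by duality on $L^p(u)$ restricted to $E$, it suffices to control $\int_E T(f\sigma)\, h\, u\, d\mu$ for nonnegative $h$ with $\|h\|_{L^{p'}(u)}\le 1$, and then the weak-type norm is comparable to the supremum of $u(E)^{-1/p'}\int_E T(f\sigma)\, u\, d\mu$ over sets $E$ with $0<u(E)<\infty$. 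So the real task is to estimate a bilinear form $\int_X T(f\sigma)\, g\, u\, d\mu$ where $g = \mathbbm{1}_E$ (or $g$ a suitably chosen bounded function), and to produce on the right-hand side a bound of the form $[u,\sigma]_{A,p}\,\|f\|_{L^p(\sigma)}\, u(E)^{1/p'}$.

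The key step is to observe that the pair $(u,\sigma)$ with the single separated bump $[u,\sigma]_{A,p}<\infty$ — where the $\sigma$ side is bumped only by the trivial power function $t^{p'}$ — can be converted, when testing against $g=\mathbbm{1}_E$, into a double-bump situation for a modified pair. Concretely, I would set $v = u\,\mathbbm{1}_E$ (or absorb $E$ into the weight) and note that for this pair the ``$B$-bump'' on the $u$-side is a genuine log bump $A$ with $\bar A \in B_{p'}$, while on the $\sigma$-side one needs a Young function $B$ with $\bar B \in B_p$; the point is that because we only need the estimate to hold for the specific test function $\mathbbm{1}_E$ and not for arbitrary $h$, we are allowed to trade regularity: the $L^{p'}$ average of $\sigma^{1/p'}$ suffices and can be ``upgraded'' to an Orlicz average via the local integrability/approximation reductions already in place, at the cost of the extra $\log$ factor $\delta$. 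Then Theorem~\ref{thm:double} applied to this pair yields precisely
\[
  \int_E T(f\sigma)\, u\, d\mu \leq C(T,X)\, [u,\sigma]_{A,B,p}\, \|f\|_{L^p(\sigma)}\, \|\mathbbm{1}_E\|_{L^{p'}(u)},
\]
and since $\|\mathbbm{1}_E\|_{L^{p'}(u)} = u(E)^{1/p'}$ and $[u,\sigma]_{A,B,p}\lesssim [u,\sigma]_{A,p}$ for the chosen $B$, dividing through by $u(E)^{1/p'}$ and taking the supremum over $E$ gives the weak-type bound.

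The main obstacle — and the step that will require genuine care rather than bookkeeping — is the passage from the separated bump $[u,\sigma]_{A,p}<\infty$ to a finite double bump $[u,\sigma]_{A,B,p}<\infty$ for an appropriate $B$ with $\bar B\in B_p$. One cannot simply take $B(t)=t^{p'}$ since that is not a Young function and $\bar B$ would fail $B_p$; instead one must choose $B$ to be a log bump only slightly larger than $t^{p'}$, say $B(t)=t^{p'}\log(e+t)^{\epsilon}$ for small $\epsilon=\epsilon(\delta)>0$, and verify that $\|\sigma^{1/p'}\|_{B,B_\rho}$ is still controlled — but in general it is \emph{not} controlled by $\|\sigma^{1/p'}\|_{p',B_\rho}$ uniformly. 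This is exactly the subtlety that forces the argument to be run at the level of the weak-type estimate (where the $g$ side is a characteristic function) rather than the strong-type estimate: for the characteristic function one exploits the extra room in the $A$-bump on the $u$-side (since $A$ has exponent $p-1+\delta$ strictly above $p-1$) to absorb a small bump on the $\sigma$-side, using a splitting of the Orlicz norm analogous to the $B^{-1}C^{-1}\lesssim A^{-1}$ factorization recalled in the preliminaries, together with the $B_p$/$B_{p'}$ integrability of the complementary bumps. I would therefore organize the proof around: (i) the duality reduction to sets $E$; (ii) a lemma showing $\|\sigma^{1/p'}\|_{B,B_\rho}\,\|\mathbbm{1}_E\|_{\text{(appropriate average)},B_\rho} \lesssim \|\sigma^{1/p'}\|_{p',B_\rho}\, (\text{something involving } u(E))$, effectively moving the characteristic-function factor across; and (iii) invoking Theorem~\ref{thm:double}. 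Steps (i) and (iii) are routine; step (ii), the Orlicz-norm juggling that lets the single separated log bump do the work of a double bump against a characteristic function, is the crux and is where the restriction $A(t)=t^p\log(e+t)^{p-1+\delta}$ (as opposed to a general $\bar A\in B_{p'}$) is genuinely used.
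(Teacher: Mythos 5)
Your proposed reduction does not follow the paper's route, and as sketched it has a genuine gap at the step you yourself flag as the crux. You want to test against $g=\mathbbm{1}_E$ and then somehow convert the separated bump $[u,\sigma]_{A,p}<\infty$ into a usable double bump $[u,\sigma]_{A,B,p}<\infty$ for a ``mildly bumped'' $B$, arguing that the presence of the characteristic function gives room to trade regularity. But the characteristic function $\mathbbm{1}_E$ sits on the $u$-side of the bilinear form $\int T(f\sigma)\,g\,u\,d\mu$: when you run the double-bump H\"older argument on a cube $Q$, the factor $\dashint_Q gu\,d\mu$ splits as $\|gu^{1/p'}\|_{\bar A,Q}\|u^{1/p}\|_{A,Q}$, so $g$ ends up inside the $\bar A$ maximal average over $u$, and nothing about $E$ touches the $\sigma$-average $\dashint_Q f\sigma\,d\mu$. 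The requirement that $\|\sigma^{1/p'}\|_{B,Q}$ be controlled (for $\bar B\in B_p$) has to come from somewhere, and you correctly observe that $\|\sigma^{1/p'}\|_{B,Q}\not\lesssim\|\sigma^{1/p'}\|_{p',Q}$ in general --- but the proposal then offers no mechanism to escape this, only the assertion that the extra $\delta$ in $A$ ``absorbs'' a small bump on $\sigma$. There is no Orlicz-norm factorization that moves a deficit on the $\sigma$-side into a surplus on the $u$-side across the two separate averages, so step (ii) as described cannot be filled in.

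The paper resolves this by a genuinely different device. Instead of trying to manufacture a double bump for the pair $(u,\sigma)$ (or $(u\mathbbm{1}_E,\sigma)$), it first proves a two-weight weak $(1,1)$ inequality for sparse operators (Theorem~\ref{thm:perez}): $u(\{T^Sf>\lambda\})\lesssim \lambda^{-1}\int f\,M_\Phi^\D u\,d\mu$. The double-bump theorem \emph{is} invoked there, but for the auxiliary pair $(u, M_\Phi^\D u)$, where the double bump $[[u,M_\Phi^\D u]]_{A_\Phi,B,p}^\D\le 1$ holds \emph{automatically} by the defining property of the Orlicz maximal operator, with no hypothesis on $\sigma$ at all. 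Then one dualizes exactly as you suggest (choosing $h$ with $\|h\|_{L^{p'}(u)}=1$ so that $u(\Omega_\lambda)^{1/p}=uh(\Omega_\lambda)$), applies the weak $(1,1)$ estimate to get $\lambda^{-1}\int f\sigma\,M_\Phi^\D(uh)\,d\mu$, and uses H\"older against $\|f\|_{L^p(\sigma)}$. The separated log-bump hypothesis is spent entirely in the final step, the weighted Orlicz maximal bound $\|M_\Phi^\D(hu)\|_{L^{p'}(\sigma)}\lesssim[u,\sigma]_{A,p}\|h\|_{L^{p'}(u)}$ (Lemma~\ref{lemma:bump-max}), which needs the factorization $A^{-1}C^{-1}\lesssim\Phi^{-1}$ with $C\in B_{p'}$; this is where the log-bump form of $A$ is genuinely used, not in a double-bump comparison for $(u,\sigma)$. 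So the correct ``double implies separated'' mechanism is mediated by the maximal function $M_\Phi u$ as an intermediate weight, not by restriction of $u$ to a level set.
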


\begin{theorem}
\label{thm:main-strong}
Given an SHT $(X,\rho,\mu)$, suppose the pair of weights $(u,\sigma)$
is such that $[u,\sigma]_{A,p} <\infty$ and $[\sigma,u]_{B,p'}<\infty$, where $A(t) =
t^plog(e+t)^{p-1+\de}$ and $B(t)=t^{p'}\log(e+t)^{p'-1+\delta}$.
Then  a Calder\'on-Zygmund operator $T$ satisfies the strong type
inequality
 \[\|T(f\s)\|_{L^{p}(u)}\leq 
C(T,X,p,\delta)\big([u,\s]_{A,p}+[\sigma,u]_{B,p'}\big)\|f\|_{L^p(\s)}.\]
\end{theorem}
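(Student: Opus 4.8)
The plan is to obtain the strong-type bound from the sparse bilinear estimates already developed for Theorem~\ref{thm:double}, the new ingredient being a stopping-time decomposition that lets a single separated condition play the role of a full double bump on each piece. First, by the reductions on $u,\s,f$, by the dyadic/sparse domination of Calder\'on--Zygmund operators on an SHT underlying the proof of Theorem~\ref{thm:double}, and by $L^p(u)$--$L^{p'}(u)$ duality, it suffices to prove that for every sparse family $\mathcal S$ in a Christ-type dyadic system on $X$ and all nonnegative $f\in L^p(\s)$, $h\in L^{p'}(u)$,
\[
\sum_{Q\in\mathcal S}\mu(Q)\Big(\dashint_Q f\s\,d\mu\Big)\Big(\dashint_Q hu\,d\mu\Big)\;\le\;C(T,X,p,\de)\big([u,\s]_{A,p}+[\s,u]_{B,p'}\big)\,\|f\|_{L^p(\s)}\|h\|_{L^{p'}(u)}.
\]
I would split $\mathcal S$ into the cubes where $\dashint_Q hu\,d\mu$ is, in an appropriate stopping sense, the dominant factor and those where $\dashint_Q f\s\,d\mu$ dominates; the first part is estimated using only $[u,\s]_{A,p}$, and the second, via the symmetry $(u,\s,p,f,A,T)\leftrightarrow(\s,u,p',h,B,T^{*})$, using only $[\s,u]_{B,p'}$. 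These contributions add, which is the source of the sum in the conclusion. (When $\de$ is large the first part can be shortcut: Orlicz interpolation between $A$ and $t^p$ on the $u$-side and between $t^{p'}$ and $B$ on the $\s$-side yields bumps $\mathcal A,\mathcal B$ with $[u,\s]_{\mathcal A,\mathcal B,p}\le[u,\s]_{A,p}^{\alpha}[\s,u]_{B,p'}^{1-\alpha}$ and with $\bar{\mathcal A}\in B_{p'}$, $\bar{\mathcal B}\in B_p$ so long as the interpolation parameter $\alpha$ keeps both interpolated logarithmic exponents supercritical, in which case Theorem~\ref{thm:double} applies at once; but this is possible only for $\de$ in a restricted range, so the general case needs the argument below.)

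For the first family I would argue as in the proof of the double bump theorem, compensating for the absent bump on the $\s$-side by a change of measure. Generalized H\"older's inequality gives $\dashint_Q hu\,d\mu\le 2\|hu^{1/p'}\|_{\bar A,Q}\|u^{1/p}\|_{A,Q}$, and absorbing $\|u^{1/p}\|_{A,Q}$ against $\big(\dashint_Q\s\,d\mu\big)^{1/p'}$ through $[u,\s]_{A,p}$ gives $\dashint_Q hu\,d\mu\lesssim[u,\s]_{A,p}\big(\dashint_Q\s\,d\mu\big)^{-1/p'}\|hu^{1/p'}\|_{\bar A,Q}$. Writing $\dashint_Q f\s\,d\mu=\big(\dashint_Q\s\,d\mu\big)\langle f\rangle^{\s}_Q$ with $\langle f\rangle^{\s}_Q$ the $\s$-average over $Q$, the powers of $\dashint_Q\s\,d\mu$ recombine and the $Q$th summand is at most $[u,\s]_{A,p}\,\mu(Q)\big(\dashint_Q\s\,d\mu\big)^{1/p}\langle f\rangle^{\s}_Q\,\|hu^{1/p'}\|_{\bar A,Q}$. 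One then runs a stopping-time/corona decomposition adapted to $\langle f\rangle^{\s}_Q$ and to $\dashint_Q\s\,d\mu$: on each corona these are essentially frozen, the corona's contribution telescopes through sparseness and the disjointness of the sets $E_Q$, and the frozen principal values are summed by a weighted Carleson embedding, reducing the whole sum to an $L^p(\s)\times L^{p'}(\mu)$ pairing of $M^{\s}f$ with $M_{\bar A}(hu^{1/p'})$. Finally one invokes boundedness of $M^{\s}$ on $L^p(\s)$ and of $M_{\bar A}$ on $L^{p'}(\mu)$ — with norm $\lesssim[\bar A]_{B_{p'}}^{1/p'}$, valid since $\bar A\in B_{p'}$ because $A$ is a log bump with $\de>0$ — and analogously $M_{\bar B}$ on $L^p(\mu)$ for the other family.

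The step I expect to be the main obstacle is the corona summation in each half. In the double bump situation both sides carry a genuine bump and the sparse form is essentially controlled term by term; here one side is only a plain average, so it must be fed into a $\s$-weighted maximal operator, and the resulting stopping-time series — indexed by the principal cubes and governed by how fast $\dashint_Q\s\,d\mu$ and $\langle f\rangle^{\s}_Q$ can grow — has to be summed using the logarithmic gain from $\de>0$. It is exactly here that one needs $A,B$ of logarithmic type, which is why Theorem~\ref{thm:main-strong} is more restrictive than Theorem~\ref{thm:double}, and also why the method, like that of~\cite{hytonen-perezP2013}, produces the constant $C(T,X,p,\de)$ rather than its sharp value. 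What remains is bookkeeping peculiar to an SHT: running the stopping time on a Christ-type (or adjacent) dyadic system and keeping the doubling constant explicit, which raises no essentially new difficulty beyond the Euclidean case.
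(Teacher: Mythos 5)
Your proposal takes a genuinely different route from the paper, and it contains a real gap at the step you yourself flag as the ``main obstacle.''

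The paper does not estimate the sparse bilinear form directly. Instead it proves the weak type separated-bump inequality (Theorem~\ref{thm:main}) first --- via the two-weight weak $(1,1)$ estimate of Theorem~\ref{thm:perez}, which is itself a consequence of the double-bump Theorem~\ref{thm:double-alt} applied with the majorant pair $(u, M^\D_\Phi u)$, combined with Lemma~\ref{lemma:bump-max} and an extrapolation in the style of~\cite{MR1713140}. The strong type Theorem~\ref{thm:main-strong} then follows in three lines: the weak-type result gives
\[
\|T^S(\cdot\s)\|_{L^p(\s)\to L^{p,\infty}(u)}\lesssim [u,\sigma]_{A,p},\qquad
\|T^S(\cdot u)\|_{L^{p'}(u)\to L^{p',\infty}(\s)}\lesssim [\sigma,u]_{B,p'},
\]
and Theorem~\ref{thm:LSUT} (the Lacey--Sawyer--Uriarte-Tuero/Treil equivalence for sparse operators, extended to an SHT) converts the sum of the two weak-type bounds directly into the strong-type bound. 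This sidesteps the corona argument entirely. It is the structural point advertised in the abstract: the separated-bump results are deduced \emph{from} the double-bump theorem, not proved in parallel with it.

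Your proposal, by contrast, sets up the dual bilinear form, splits the sparse family by which average dominates, and tries to control each half through a stopping-time/corona decomposition reducing to $M^\s f$ and $M_{\bar A}(hu^{1/p'})$ (and the symmetric objects). The strategy resembles the direct arguments in~\cite{CRV2012} and~\cite{hytonen-perezP2013}, so it is not implausible. But the decisive step --- summing the stopping-time series, where only one bump is present and the other side carries a bare average --- is exactly what those papers have to work hard at, and it is left as a sketch in your proposal. Concretely, after your reductions the $Q$th summand carries a factor $\mu(Q)\bigl(\dashint_Q\s\,d\mu\bigr)^{1/p}$, and sparseness converts $\mu(Q)$ to $\mu(E(Q))$ but not to $\s(E(Q))$; reconciling the $\mu$-measure with the $\s$-average to feed $\langle f\rangle^\s_Q$ into a $\s$-weighted Carleson embedding is precisely the point where the logarithmic gain has to be spent, and you do not show how. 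Your remark that one can sometimes interpolate back to a genuine double bump is correct but, as you note, only works for a restricted range of $\delta$, so it cannot stand in for the general case. Finally, the quick boundedness claims for $M^\s$ on $L^p(\s)$ and $M_{\bar A}$ on $L^{p'}(\mu)$ are fine and match the paper's Lemma~\ref{lemma:orlicz-max} and Lemma~\ref{lemma:bump-max}; those are not where the difficulty lies.

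In short: the approach might be made to work, but as written it has a real hole where the paper has a clean shortcut, namely Theorem~\ref{thm:LSUT}. If you want to reproduce the paper's proof, the key move is to prove the weak-type inequality under one separated bump and then invoke the weak-to-strong equivalence for sparse operators.
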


\medskip

The remainder of this paper is organized as follows.  In
Section~\ref{sec:dyadic} we introduce the powerful notion of dyadic
grids on spaces of homogeneous type.  These were first constructed by
Christ~\cite{MR1104656}, but we will follow the more recent work of
Hyt\"onen and Kairema~\cite{MR2901199}.  These grids let us naturally
extend the Calder\'on-Zygmund decomposition and the techniques of the
so-called sparse operators to an SHT.  In
Section~\ref{sec:step1-step2} we will reduce the proof of our main
theorems to proving estimates for sparse operators.  The proof depends
on results that in the Euclidean case are due to
Lerner~\cite{Lern2012} and Lacey, Sawyer and
Uriarte-Tuero~\cite{LacSawUT2010}.  We give the corresponding results
for an SHT.  In Section~\ref{sec:double} we prove
Theorem~\ref{thm:double} by proving the corresponding result for
sparse operators.  The proof is nearly identical to the proof given
in~\cite{dcu-martell-perez} in the Euclidean case, so we only sketch
the details.  In Section~\ref{sec:step4} we prove a weak $(1,1)$
inequality for sparse operators that we need for our proof of
Theorem~\ref{thm:main}.  Our proof follows the broad outline of the
analogous result for singular integrals in Euclidean spaces due to
P\'erez~\cite{perez94b}; however, it is simpler because of the
localized behavior of sparse operators. In Section~\ref{sec:step5} we
prove Theorems~\ref{thm:main} and~\ref{thm:main-strong}.  Finally, in
Section~\ref{section:example} we construct a pair of weights on the
real line that satisfies a separated logarithmic bump condition but
not the corresponding double bump condition.

\bigskip

In our proofs of Theorems~\ref{thm:main} and~\ref{thm:main-strong}
the only place we use that $A$ and $B$ are log bumps is in the final
argument in Section~\ref{sec:step5}.  However, despite repeated
efforts we are unable to eliminate this assumption.  Nevertheless, we conjecture
that both results are true with the weaker assumption that $\bar{A}\in
B_{p'}$ and $\bar{B}\in B_p$, but we believe that new techniques will
be required to prove this.  On the other hand, very recently Nazarov, Reznikov
and Volberg~\cite{nazarov-reznikov-volbergP} have given a proof of the
separated bump result in Euclidean spaces using Bellman functions.
Certain aspects of their proof lead them to suggest that the full
conjecture may be false.

\section{Dyadic cubes in spaces of homogeneous type}
\label{sec:dyadic}

An important tool for our proofs is the concept of a dyadic grid $\D$
on an SHT and the concept a sparse family $S$ in $\D$.  These
generalize the classical Calder\'on-Zygmund decomposition
(cf.~\cite[Appendix~A]{MR2797562}).  The following result is due to
Hyt\"onen and Kairema~\cite{MR2901199} (see also
Christ~\cite{MR1104656}).

\begin{theorem}
\label{dyadic}
Given an SHT $(X,\rho,\mu)$, there exist constants $C>0$, $0<\eta,\,\epsilon<1$, depending on $X$, a family of sets $\D =\cup_{k\in
  \mathbb{Z}}\D_k$ (called a dyadic decomposition of $X$) and a
corresponding family of points $\{ x_c(Q)\}_{Q\in \D}$ that satisfy
the following properties:
\begin{enumerate}
\setlength{\itemsep}{6pt}
\item  for all $k\in \mathbb{Z}$, $\displaystyle X = \bigcup_{Q\in \D_k}Q$;

\item If $Q_1,Q_2\in \D\text{ then either }Q_1\cap Q_2 = \emptyset$,
  $Q_1 \subset Q_2$ or $Q_2\subset Q_1$;

\item For any $Q_1\in \D_k$ there exists at least one $Q_2\in
  \D_{k+1}$ (called a child of $Q_1$) such that $Q_2\subset Q_1$, and
  there exists exactly one $Q_3\in \D_{k-1}$  (called the parent of
  $Q_1$) such that $Q_1\subset Q_3$;

\item If  $Q_2$ is a child of $Q_1$, then  $\mu(Q_2)\geq \epsilon \mu(Q_1)$;

\item $B(x_c(Q), \eta^k)\subset Q \subset B(x_c(Q), C\eta^k)$.
\end{enumerate}
\end{theorem}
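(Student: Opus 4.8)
The plan is to adapt the construction of Hyt\"onen and Kairema (a refinement of Christ's), which proceeds in three stages: build maximal separated nets at every scale, organize them into a forest, and carve the cubes out of that forest. Fix a parameter $\eta\in(0,1)$ whose size will be constrained below in terms of the quasi-metric constant of $\rho$.

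For each $k\in\Z$, use Zorn's lemma — or, since $\mu$ doubling forces $X$ to be geometrically doubling, a countable greedy procedure — to select a maximal set of points $\{x_\alpha^k\}_{\alpha\in\mathscr{A}_k}$ with $\rho(x_\alpha^k,x_\beta^k)\ge\eta^k$ for $\alpha\ne\beta$. Maximality yields the covering property $\min_\alpha\rho(x,x_\alpha^k)<\eta^k$ for every $x\in X$. Provided $\eta$ is small relative to the quasi-metric constant, one may moreover arrange $\mathscr{A}_k\subseteq\mathscr{A}_{k+1}$ with $x_\alpha^{k+1}=x_\alpha^k$ for $\alpha\in\mathscr{A}_k$ (finer nets by extension, coarser ones as maximal separated subnets), while keeping the covering constant uniform in $k$: a geometric-series estimate shows the inflation incurred at each coarsening step is self-limiting. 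Next define a parent map $\pi_k\colon\mathscr{A}_{k+1}\to\mathscr{A}_k$ by $\pi_k(\alpha)=\alpha$ when $\alpha\in\mathscr{A}_k$, and otherwise letting $\pi_k(\alpha)$ be an index in $\mathscr{A}_k$ nearest to $\alpha$, with ties broken by a fixed well-ordering of $\mathscr{A}_k$; the covering property forces $\rho(x_\alpha^{k+1},x_{\pi_k(\alpha)}^k)\le C\eta^k$. Composing parent maps defines, for $l\ge k$, a partial order $(\beta,l)\preceq(\alpha,k)$ meaning that iterating $\pi$ from level $l$ up to level $k$ carries $\beta$ to $\alpha$; geometric doubling makes the resulting forest locally finite.

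The heart of the proof is to produce, for each $k$, a partition $X=\bigsqcup_\alpha Q_\alpha^k$ satisfying $B(x_\alpha^k,c\eta^k)\subseteq Q_\alpha^k\subseteq B(x_\alpha^k,C\eta^k)$ and $Q_\beta^l\subseteq Q_\alpha^k$ exactly when $(\beta,l)\preceq(\alpha,k)$. One assigns to each $x\in X$ and each $k$ a net index $a_k(x)\in\mathscr{A}_k$ — morally the scale-$k$ net point closest to $x$ — and sets $Q_\alpha^k:=\{x:a_k(x)=\alpha\}$. Disjointness and the same-scale partition are then immediate; roundness follows because $a_k(x)=\alpha$ forces $\rho(x,x_\alpha^k)\le C\eta^k$ (outer ball), while the covering property and the tie-breaking rule keep $B(x_\alpha^k,c\eta^k)$ inside $Q_\alpha^k$ (inner ball). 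The main obstacle is to make the assignments consistent with the forest, i.e. $(a_{k+1}(x),k+1)\preceq(a_k(x),k)$: a naive global nearest-point rule need not be consistent, so one selects a consistent family of near-optimal choices scale by scale, using local finiteness of the forest — a K\"onig-type compactness argument — to guarantee such a family exists. Equivalently, following Christ, one defines open cubes recursively as interiors of unions of children and proves separately that the leftover boundary is $\mu$-null; the half-open version of Hyt\"onen and Kairema dispenses with that null-set bookkeeping. Any two of the three requirements (honest partition, exact nesting, roundness) are easy — a Voronoi partition gives the first and third, a union of descendants gives the second — but all three together force the slightly non-canonical handling of boundary points, and it is here, together with the uniform-covering step above, that smallness of $\eta$ is needed.

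It remains to verify properties (1)--(5). Property (1) is the same-scale partition, and (2) follows from the definition of the $Q_\alpha^k$: comparable indices give nested cubes, incomparable ones give disjoint cubes since $a_l(\cdot)$ is single-valued, and distinct cubes at one scale are disjoint. For (3), $x_\alpha^k\in Q_\alpha^k$ (a net point is its own nearest net point), so cubes are nonempty; the parent of $Q_\alpha^k$ is the unique cube $Q_{\pi_{k-1}(\alpha)}^{k-1}$, and since $\alpha\in\mathscr{A}_{k+1}$ with $\pi_k(\alpha)=\alpha$, the cube $Q_\alpha^{k+1}$ is a child. Property (5) is built into the construction. Finally, (4) is the only place the doubling of $\mu$ itself is used: if $Q'=Q_\beta^{k+1}$ is a child of $Q=Q_\alpha^k$ then $\rho(x_\beta^{k+1},x_\alpha^k)\le C\eta^k$, so $Q\subseteq B(x_\alpha^k,C\eta^k)\subseteq B(x_\beta^{k+1},C'\eta^{k+1})$ for a fixed $C'$, and iterating the doubling inequality a bounded number $N$ of times gives $\mu(Q)\le\mu(B(x_\beta^{k+1},C'\eta^{k+1}))\le C_d^{\,N}\,\mu(B(x_\beta^{k+1},c\eta^{k+1}))\le C_d^{\,N}\,\mu(Q')$, so $\epsilon:=C_d^{-N}$ works.
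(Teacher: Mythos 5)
The paper does not prove this theorem: it is imported as a black box from Hyt\"onen--Kairema \cite{MR2901199}, with Christ \cite{MR1104656} as the earlier precedent, so there is no in-text proof to measure you against. Your sketch does reproduce the architecture of that construction faithfully: nested maximal $\eta^k$-separated nets with a uniformly bounded covering constant (the geometric-series control of the inflation at each coarsening step is exactly the right point to flag), parent maps assembling the index sets into a locally finite forest (and yes, $\mu$-doubling does force geometric doubling, which gives the local finiteness), and reading off property (4) from (5) plus a bounded number of iterations of the measure-doubling inequality --- that last computation is correct.

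The one place your primary argument is weaker than it looks is the consistency step, which is the technical heart. A pointwise ``K\"onig-type'' selection produces, for each fixed $x$, \emph{some} chain through the forest, but by itself it does not make the sets $Q^k_\alpha=\{x:a_k(x)=\alpha\}$ Borel, and it does not force different points' chains to interact compatibly --- two nearby points could be handed chains that branch in incompatible ways, wrecking the clean nesting of (2)--(3). What Hyt\"onen--Kairema actually do (and what Christ did in the closed/open-cube form you mention in passing) is an explicit, deterministic selection rule built scale by scale from a fixed ordering of the index sets; that explicitness is what simultaneously delivers measurability, exact nesting, and the inner-ball containment in (5), with no compactness needed. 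Since you name the explicit construction as an ``equivalent'' alternative, you clearly know where the argument really lives; I would just reverse the emphasis and make the order-based iterative selection the main line, relegating the existential argument to motivation. As a final cosmetic note, the inner radius in the paper's statement of (5) is $\eta^k$ rather than $c\eta^k$; this is only a renormalization, but be aware you cannot absorb $c$ by shrinking $\eta$ without also changing the separation scale of the nets, so the constant really should be carried.
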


The sets $Q \in \D$ are referred to as dyadic cubes with center
$x_c(Q)$ and sidelength $\eta^k$,  but we must emphasize that
these are not cubes in any standard sense even if the underlying space
is $\R^n$, and care must be taken when visualizing them.  An exact
characterization of the kinds of sets which can be dyadic cubes is
given in~\cite{hytonen-kairemaP}.
Below we will need the dilations $\lambda Q$, $\lambda>1$, of dyadic
cubes.  However, these will actually be balls containing $Q$: given a cube $Q$, we define
\begin{equation*}
\lambda Q=B(x_c(Q), \lambda C \eta^k).
\end{equation*}

Families of dyadic grids can be constructed that have additional
useful properties:  see~\cite{MR2901199}.  We apply one such family to
show that our bump conditions can be restated in terms of dyadic
cubes.  Given a dyadic grid $\D$, a pair of weights $(u,\sigma)$, and
a Young function $A$, define
\[  [u,\sigma]_{A,p}^\D =\sup_{Q\in \D} 
\|u^{1/p}\|_{A,Q} \|\sigma^{1/p'}\|_{p',Q}. \]
We define $ [u,\sigma]_{A,B,p}^\D$ similarly.

\begin{lemma} \label{lemma:equiv-bump}
Given a pair of weights $(u,\sigma)$, and
Young functions $A$ and $B$, 
\[ [u,\sigma]_{A,p} \approx \sup_\D [u,\sigma]_{A,p}^\D, \qquad 
[u,\sigma]_{A,B,p} \approx \sup_\D [u,\sigma]_{A,B,p}^\D.\]
In both cases, the constants in the equivalence depend only on $X$.
\end{lemma}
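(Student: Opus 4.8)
The plan is to establish each of the two estimates in both equivalences separately, relying throughout on one elementary fact about Orlicz norms: if $E\subset F$ and $\mu(F)\le c\,\mu(E)$, then $\|v\|_{A,E}\le c\,\|v\|_{A,F}$ for every Young function $A$. This is immediate from the convexity of $A$ (which gives $A(t/c)\le c^{-1}A(t)$ for $c\ge 1$, since $A(0)=0$) together with $\int_E\le\int_F$: taking $\lambda=\|v\|_{A,F}$ one has $\dashint_E A(|v|/(c\lambda))\le c^{-1}\,\dashint_E A(|v|/\lambda)\le c^{-1}\,\frac{\mu(F)}{\mu(E)}\dashint_F A(|v|/\lambda)\le\dashint_F A(|v|/\lambda)\le1$, so $c\lambda$ is admissible in the infimum defining $\|v\|_{A,E}$. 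Since $\|\cdot\|_{p',E}$ is the special case $A(t)=t^{p'}$, the same reasoning applies simultaneously to $[u,\sigma]_{A,p}$ and to $[u,\sigma]_{A,B,p}$; I will write only the argument for the former.

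For the inequality $\sup_\D[u,\sigma]_{A,p}^\D\lesssim[u,\sigma]_{A,p}$, I would fix a dyadic grid $\D$ and a cube $Q\in\D_k$. By property (5) of Theorem~\ref{dyadic}, $B(x_c(Q),\eta^k)\subset Q\subset B(x_c(Q),C\eta^k)=:B$, and the doubling property gives $\mu(B)\le C_d^{\lceil\log_2 C\rceil}\mu(B(x_c(Q),\eta^k))\le C(X)\mu(Q)$. The fact above then yields $\|u^{1/p}\|_{A,Q}\le C(X)\|u^{1/p}\|_{A,B}$ and $\|\sigma^{1/p'}\|_{p',Q}\le C(X)\|\sigma^{1/p'}\|_{p',B}$; multiplying and taking the supremum over $Q\in\D$ and then over all grids $\D$ gives the bound, with constant depending only on $X$.

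The reverse inequality is the substantive direction, and here I would invoke the adjacent dyadic systems of Hyt\"onen and Kairema~\cite{MR2901199}: there is a finite family of dyadic grids $\D^1,\dots,\D^N$, with $N=N(X)$, such that every ball $B=B_\rho(x_0,r)$ is contained in some cube $Q\in\D^t$ (with $t$ depending on $B$) satisfying $Q\subset B(x_c(Q),C(X)r)$. From $B\subset Q$ we get $\rho(x_0,x_c(Q))< C(X)r$, so by the quasi-triangle inequality and doubling $\mu(Q)\le\mu(B(x_c(Q),C(X)r))\le C(X)\mu(B(x_0,r))=C(X)\mu(B)$. Applying the monotonicity fact in the other direction gives $\|u^{1/p}\|_{A,B}\le C(X)\|u^{1/p}\|_{A,Q}$ and $\|\sigma^{1/p'}\|_{p',B}\le C(X)\|\sigma^{1/p'}\|_{p',Q}$ for the \emph{same} cube $Q$, hence $\|u^{1/p}\|_{A,B}\|\sigma^{1/p'}\|_{p',B}\le C(X)\,[u,\sigma]_{A,p}^{\D^t}\le C(X)\sup_\D[u,\sigma]_{A,p}^\D$; taking the supremum over balls $B$ finishes the proof, and the argument for $[u,\sigma]_{A,B,p}$ is verbatim with $\|\sigma^{1/p'}\|_{p',\cdot}$ replaced by $\|\sigma^{1/p'}\|_{B,\cdot}$.

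The main obstacle will be bookkeeping rather than anything deep: one must extract from~\cite{MR2901199} not merely that each ball sits inside a dyadic cube of some grid in the family, but that the cube can be chosen of comparable size (equivalently, by doubling, of comparable measure), and then keep track of the quasimetric constants through the various inclusions. The analytic content — that a bounded distortion of the averaging set changes an Orlicz norm by at most a bounded factor — is soft and uses only convexity of $A$.
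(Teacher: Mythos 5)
Your proof is correct and takes essentially the same route as the paper: one direction follows from the convexity-based Orlicz-norm comparison between a dyadic cube and the containing ball supplied by Theorem~\ref{dyadic}(5), and the reverse from the adjacent dyadic systems of Hyt\"onen and Kairema (\cite[Theorem~4.1]{MR2901199}). You merely make explicit the quasimetric and doubling bookkeeping that the paper leaves implicit.
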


\begin{proof}
We prove the first equivalence; the proof of the second is identical.
Given a dyadic grid $\D$ and $Q\in \D$, by Theorem~\ref{dyadic} there
exists a ball $B_\rho$ such that $Q\subset B_\rho$ and $\mu(B_\rho)\approx
\mu(Q)$.  Therefore, there exists $C(X)>1$ such that for any $\lambda>0$,
\[ \dashint_Q A\left(\frac{u(x)}{\lambda}\right)\,d\mu(x) \leq
C(X) \dashint_{B\rho} A\left(\frac{u(x)}{\lambda}\right)\,d\mu(x) \leq
\dashint_{B\rho} A\left(\frac{C(X)u(x)}{\lambda}\right)\,d\mu(x); \]
the last inequality holds since Young functions are convex.
Hence, by the definition of the Orlicz norm, $\|u\|_{A,Q} \leq
C(X)\|u\|_{A,B_\rho}$.  The same estimate holds for the norm of
$\sigma$. We thus have that 
\[ \sup_\D [u,\sigma]_{A,p}^\D \leq C(X) [u,\sigma]_{A,p}. \]
To prove the reverse inequality, we use the fact that there exists a
family of dyadic grids $\D^1,\ldots, \D^J$, $J$ depending only on $X$,
that satisfy the properties of Theorem~\ref{dyadic} with the
additional property that given any ball $B_\rho$, there exists $j$ and
$Q\in \D^j$ such that $B_\rho\subset Q$ and $\mu(B_\rho) \approx
\mu(Q)$.  (See~\cite[Theorem~4.1]{MR2901199}.)  Therefore, we can
repeat the above argument, reversing the roles of $B_\rho$ and $Q$, to get 
\[ [u,\sigma]_{A,p}  \leq C(X)\sup_\D [u,\sigma]_{A,p}^\D. \]
\end{proof}

\bigskip

Given a collection of dyadic cubes $\D$, a 
sparse family $S \subset \D$ is a collection of dyadic
cubes  for which there exists a collection of sets $\{E(Q) : Q \in S
\}$ such that the sets $E(Q)$ are pairwise disjoint, $E(Q)\subset Q$,
and $\mu(Q) \leq 2\mu(E(Q))$.   
Sparse families of cubes are a generalization of the
Calder\'on-Zygmund decomposition in the Euclidean case.  Using
Theorem~\ref{dyadic} we can form this decomposition in an SHT.    In
order to do this we need the Lebesgue differentiation theorem, which
holds in any SHT.  This fact seems to be new, though
the proof only consists of assembling pieces already present in the
literature: in particular, it is implicit in
Toledano~\cite{MR2061315}.

\begin{lemma} \label{lemma:LDT}
Given an SHT $(X,\rho,\mu)$, the Lebesgue differentiation theorem
holds:  for $\mu$-almost every $x\in X$, 
\begin{equation} \label{eqn:LDT1}
 \lim_{r\rightarrow 0}\frac{1}{\mu(B_\rho(x,r))}\int_{B_\rho(x,r)}
|f(y)-f(x)|\,d\mu(y) = 0.
\end{equation}
\end{lemma}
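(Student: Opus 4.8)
The plan is to deduce the Lebesgue differentiation theorem on $(X,\rho,\mu)$ from two standard ingredients: a weak-type $(1,1)$ bound for the uncentered Hardy--Littlewood maximal operator, and the density of continuous functions of compact support in $L^1(X,\mu)$. First I would recall that on any SHT the maximal operator
\[ Mf(x) = \sup_{B_\rho \ni x} \frac{1}{\mu(B_\rho)} \int_{B_\rho} |f|\,d\mu \]
is of weak type $(1,1)$; this follows from the Vitali-type covering lemma available in doubling spaces (see Coifman--Weiss~\cite{MR0499948}), with the constant depending only on $C_d$. Here it is convenient to work with balls $B_\rho(x,r)$; since $\rho$ is only a quasimetric the triangle-inequality constant enters the covering argument, but doubling still yields the bound.

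Next I would introduce the oscillation operator
\[ \Omega f(x) = \limsup_{r\to 0} \frac{1}{\mu(B_\rho(x,r))}\int_{B_\rho(x,r)} |f(y)-f(x)|\,d\mu(y), \]
and observe the pointwise bound $\Omega f(x) \le Mf(x) + |f(x)|$, so $\Omega f$ is finite a.e.\ and, more importantly, $\Omega$ is sublinear and satisfies $\Omega(f+g) \le \Omega f + \Omega g$. The key reduction is that $\Omega g \equiv 0$ whenever $g$ is continuous with compact support: given such a $g$ and $\ee>0$, uniform continuity gives $\delta$ with $|g(y)-g(x)|<\ee$ for $\rho(x,y)<\delta$, hence the average in the definition is at most $\ee$ for $r<\delta$, for every $x$. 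Then, given an arbitrary $f\in L^1(X,\mu)$ and any $\lambda>0$, pick $g$ continuous with compact support so that $\|f-g\|_{L^1(\mu)}<\lambda^2$; writing $f = g + (f-g)$ and using subadditivity together with the weak $(1,1)$ bound,
\[ \mu\bigl(\{x : \Omega f(x) > 2\lambda\}\bigr) \le \mu\bigl(\{\Omega(f-g) > \lambda\}\bigr) \le \mu\bigl(\{M(f-g) > \lambda\}\bigr) + \mu\bigl(\{|f-g|>\lambda\}\bigr) \lesssim \frac{\|f-g\|_{L^1}}{\lambda} < \lambda. \]
Letting $\lambda\to 0$ forces $\Omega f = 0$ a.e., which is exactly~\eqref{eqn:LDT1}. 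One mild technical point: the statement of the lemma is for the fixed weights/functions we consider, which are bounded with compact support and hence trivially in $L^1(X,\mu)$, so the hypothesis $f\in L^1$ is automatic; but the argument works for general $f\in L^1_{loc}$ by the usual exhaustion of $X$ by balls.

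The main obstacle I anticipate is not conceptual but the verification that the two imported tools genuinely hold on an SHT with a \emph{quasimetric} rather than a metric: specifically, that continuous functions of compact support are dense in $L^1(X,\mu)$ (which requires knowing $\mu$ is Borel regular and that balls, or suitable replacements, generate enough open sets — here one may need to pass to an equivalent metric $\rho'$ with $\rho' \approx \rho^{\beta}$ via the Macías--Segovia theorem, noting that LDT for $\rho'$-balls transfers to $\rho$-balls because the two families of balls are comparable up to fixed dilation factors), and that the Vitali covering lemma tolerates the quasimetric constant. This is precisely the ``assembling pieces already in the literature'' alluded to in the statement, and following Toledano~\cite{MR2061315} one checks each ingredient survives; I would cite these rather than reprove them, keeping the written proof short.
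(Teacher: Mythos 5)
Your proposal follows essentially the same route as the paper: weak $(1,1)$ for the maximal operator, density of continuous functions, the standard oscillation/sublinearity argument, and the Mac\'\i as--Segovia / Toledano references to handle the quasimetric and the regularity of $\mu$. The one point worth noting is that the paper does not assert density of $C_c(X)$ in $L^1(X,\mu)$ on the whole space directly; it first passes to an equivalent quasidistance $\delta$ for which every ball $(B_\delta,\delta,\mu)$ is again an SHT with uniform constants, applies Toledano to get that $\mu$ restricted to each $B_\delta$ (which has finite measure) is regular, runs the Rudin-style argument inside each such ball, and then covers $X$ by countably many balls. Your ``mild technical point'' is precisely this localization step, and it is the real content of the lemma rather than a side remark; if you incorporate that exhaustion explicitly (as you gesture at in your last sentence) your proof and the paper's coincide.
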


\begin{proof}
Mac\'\i as and Segovia, building on their earlier work in~\cite{MR546295},
showed in~\cite{macias-segovia80} that given any SHT $(X,\rho,\mu)$,
there exists an equivalent quasidistance $\delta$ (i.e., there exist
constants $c_1,\,c_2$ depending on $X$ such that for all $x,\,y\in X$,
$c_1\rho(x,y) \leq \delta(x,y) \leq c_2\rho(x,y)$), such that given
any ball $B_\delta$ with respect to $\delta$, then
$(B_\delta,\delta,\mu)$ is again a space of homogeneous type, and the
constants are independent of the ball $B_\delta$.

Toledano~\cite{MR2061315} proved that since $\mu(B_\delta)<\infty$,
the measure $\mu$ when restricted to $B_\delta$ is regular.  The
Lebesgue differentiation theorem holds for regular measures: this
follows from the standard argument
(cf. Rudin~\cite[Chapter~7]{MR924157}) using the fact that the maximal
operator is weak $(1,1)$ on $L^1(B_\delta,\mu)$
(Christ~\cite{MR1104656}) and that smooth functions of compact support
are dense in $L^1(B_\delta,\mu)$ (\cite[Chapter~3]{MR924157}).
Therefore, we have that for
$\mu$-almost every $x\in B_\delta$,
\[ \lim_{r\rightarrow 0}\dashint_{B_\delta(x,r)}
|f(y)-f(x)|\,d\mu(y) = 0. \]
Since $\rho$ and $\delta$ are equivalent and $\rho$ is doubling, it
follows immediately that \eqref{eqn:LDT1} holds in $B_\rho$.  Since $X$ can be
covered by a countable collection of $\rho$-balls, it holds for
$\mu$-almost every $x\in X$.
\end{proof}

\begin{remark}
  As a corollary to this proof we also have that $C_c^\infty(X)$ is
  dense in $L^1(X,\mu)$.  This fact, together with
  Lemma~\ref{lemma:LDT}, can be used to simplify the 
  hypotheses for results in a number of papers: see, for
  example,~\cite{MR2661130,MR0442579}.
\end{remark}

\begin{corollary} \label{cor:LDT-dyadic}
Given an SHT $(X,\rho,\mu)$ and a dyadic grid $\D$ that satisfies the
hypotheses of Theorem~\ref{dyadic}, then for $\mu$-almost every $x\in
X$, if $\{Q_k\}$ is the sequence of dyadic cubes in $\D$ such that
$\cap_k Q_k = \{x\}$, then
\begin{equation} \label{eqn:LDT-dyadic1}
 \lim_{k\rightarrow \infty}\dashint_{Q_k}
|f(y)-f(x)|\,d\mu(y) = 0.
\end{equation}
\end{corollary}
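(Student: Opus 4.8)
The plan is to deduce \eqref{eqn:LDT-dyadic1} from the ball version \eqref{eqn:LDT1} by sandwiching each dyadic cube $Q_k$ between two $\rho$-balls \emph{centered at the point $x$ itself} whose measures are comparable to $\mu(Q_k)$, with comparison constants independent of $k$ and of $x$. Fix $x$ and the nested sequence $\{Q_k\}$ with $\bigcap_k Q_k=\{x\}$, say $Q_k\in\D_k$, and let $\kappa\ge 1$ denote the quasi-triangle inequality constant of $\rho$ and $C$ the constant from Theorem~\ref{dyadic}(5). Since $x\in Q_k\subset B_\rho(x_c(Q_k),C\eta^k)$ we have $\rho(x,x_c(Q_k))<C\eta^k$, and two applications of the quasi-triangle inequality give, with $r_k:=2\kappa C\eta^k$,
\[
Q_k\subset B_\rho(x_c(Q_k),C\eta^k)\subset B_\rho(x,r_k)\subset B_\rho\bigl(x_c(Q_k),\kappa C(2\kappa+1)\eta^k\bigr).
\]
Because $0<\eta<1$, we have $r_k\to 0$ as $k\to\infty$.

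Next I would use the doubling property to compare the measures. Choose $N=N(X)$ with $2^N\ge\kappa C(2\kappa+1)$; iterating the doubling inequality $N$ times and then invoking the other inclusion in Theorem~\ref{dyadic}(5), namely $B_\rho(x_c(Q_k),\eta^k)\subset Q_k$, we get
\[
\mu(B_\rho(x,r_k))\le \mu\bigl(B_\rho(x_c(Q_k),\kappa C(2\kappa+1)\eta^k)\bigr)\le C_d^{\,N}\,\mu(B_\rho(x_c(Q_k),\eta^k))\le C_d^{\,N}\,\mu(Q_k).
\]
Combined with $\mu(Q_k)\le\mu(B_\rho(x,r_k))$, which is immediate from $Q_k\subset B_\rho(x,r_k)$, this yields $\mu(Q_k)\le\mu(B_\rho(x,r_k))\le C(X)\,\mu(Q_k)$ with $C(X)=C_d^{\,N}$ independent of $k$ and $x$.

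With these two facts the corollary follows from Lemma~\ref{lemma:LDT} by the standard averaging bound: since $Q_k\subset B_\rho(x,r_k)$,
\[
\dashint_{Q_k}|f(y)-f(x)|\,d\mu(y)\le\frac{1}{\mu(Q_k)}\int_{B_\rho(x,r_k)}|f(y)-f(x)|\,d\mu(y)\le C(X)\dashint_{B_\rho(x,r_k)}|f(y)-f(x)|\,d\mu(y),
\]
and by Lemma~\ref{lemma:LDT}, for $\mu$-almost every $x$ the right-hand side tends to $0$ as $k\to\infty$, since $r_k\to 0$ and the limit in \eqref{eqn:LDT1} is taken over all $r\to 0$, hence in particular along the sequence $r_k$. (One may also note in passing that the nested sequence with $\bigcap_k Q_k=\{x\}$ exists for every $x$: part (1) of Theorem~\ref{dyadic} places $x$ in a cube of each generation, part (2) together with the uniqueness of parents in part (3) forces these cubes to be nested, and the inclusion $Q_k\subset B_\rho(x,r_k)$ with $r_k\to 0$ forces the intersection to be exactly $\{x\}$.) There is no real obstacle here; the only point requiring a little care — in contrast to the Euclidean case — is carrying the quasi-triangle constant $\kappa$ through the geometric bookkeeping, but since all the resulting dilation and measure-comparison factors are uniform in $k$ and $x$ this causes no difficulty.
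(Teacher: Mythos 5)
Your proposal is correct and follows essentially the same route as the paper: sandwich each dyadic cube between $\rho$-balls of comparable measure via Theorem~\ref{dyadic}(5), use the quasi-triangle inequality plus doubling to pass from the ball centered at $x_c(Q_k)$ to one centered at $x$ itself, and then invoke Lemma~\ref{lemma:LDT}. The only difference is organizational: the paper first records the general fact that the centered LDT extends to arbitrary shrinking sequences of (not necessarily centered) balls and then applies it to the cube-containing balls, while you fold that step directly into the cube-to-centered-ball comparison; the mathematics is the same.
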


\begin{proof}
First note that since $\rho$ is a quasi-distance and $\mu$ is
doubling, if $x \in B(x_0,r)$, then $B(x_0,r)\subset B(x,2Kr)$ and 
$\mu(B(x,2Kr))\approx \mu(B(x_0,r))$.  Hence,
\[ \dashint_{B(x_0,r)} |f(y)-f(x)|\,d\mu(y) \leq
C\dashint_{B(x,2Kr)}|f(y)-f(x)|\,d\mu(y). \]
Therefore, if $B_k$ is a sequence of balls such that $\bigcap_k B_k
=\{x\}$, then it follows from Lemma~\ref{lemma:LDT} that
\begin{equation} \label{eqn:LDT-dyadic2}
 \lim_{k\rightarrow 0} \dashint_{B_k} |f(y)-f(x)|\,d\mu(y) = 0.
\end{equation}

Now for any $k$, by Theorem~\ref{dyadic} there exists a ball $B_k$
such that $x\in Q_k\subset B_k$ and $\mu(B_k) \leq C\mu(Q_k)$.
Then~\eqref{eqn:LDT-dyadic1} follows at once from~\eqref{eqn:LDT-dyadic2}.
\end{proof}

\begin{remark}
Corollary~\ref{cor:LDT-dyadic} was stated in~\cite{AV-2012} without
proof and with a reference to~\cite{MR2061315}.  However, as we noted,
this result was only implicit there.
\end{remark}

We now extend the Calder\'on-Zygmund decomposition to an SHT.  We give
a version that holds for Orlicz norms and not just for $L^1$ averages.
We begin by defining  a dyadic Orlicz maximal operator.  Given a dyadic grid $\D$
and a Young function $\Phi$, define
\[  M_\Phi^\D f(x) = \sup_{x\in Q\in \D} \|f\|_{\Phi,Q}. \]
The standard dyadic maximal operator is gotten by taking $\Phi(t)=t$;
in this case we simply write $M^\D$.

\begin{theorem} \label{thm:CZ-cubes} Given an SHT $(X,\rho,\mu)$  such
  that $\mu(X)=\infty$, a
  dyadic grid $\D$, and a Young function $\Phi$, suppose that $f$ is a
  measurable function such that $\|f\|_{\Phi,Q}\rightarrow 0$ as
  $\mu(Q)\to \infty$.  Then the following are true:

\begin{enumerate}

\item For each $\lambda>0$, there exists a collection 
$\{Q_j\}\subset \D$ that is pairwise disjoint, maximal with respect to
inclusion, and such that
\[
\Omega_\lambda = \{x\in X : M_\Phi^\D f(x)>\lambda\}= \bigcup_j Q_j.
\]
Moreover, there exists a constant $C(X)$ such that for every $j$,
\[
\lambda<\|f\|_{\Phi,Q_j}\le C(X)\,\lambda.
\]

\item Given $a>2/\epsilon$, where $\epsilon$ is as in
  Theorem~\ref{dyadic}, for each $k\in \Z$ let $\{Q_j^k\}_j$ be the
   collection of maximal dyadic cubes in $(1)$ with
$$
\Omega_k = \{x\in X: M_\Phi^\D f(x)>a^k\}= \bigcup_j Q_j^k.
$$
Then the set of cubes $S=\{Q_j^k\}$ is sparse, and $E(Q_j^k)=
Q_j^k\setminus \Omega_{k+1}$.
\end{enumerate}

If $\mu(X)<\infty$, then $(1)$ holds provided that $\lambda >
\dashint_X |f(x)|\,d\mu(x)$, and $(2)$ holds for all $k$ such that $a^k>\dashint_X |f(x)|\,d\mu(x)$.
\end{theorem}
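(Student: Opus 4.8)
The plan is to adapt the classical Calder\'on--Zygmund stopping-time construction to the dyadic grid $\D$ of Theorem~\ref{dyadic}, using the Orlicz maximal operator $M_\Phi^\D$ in place of the ordinary dyadic maximal operator, and then to verify the sparseness in part $(2)$ via the differentiation-type control furnished by Corollary~\ref{cor:LDT-dyadic}. First I would fix $\lambda>0$ and observe that since $\|f\|_{\Phi,Q}\to 0$ as $\mu(Q)\to\infty$ (or, when $\mu(X)<\infty$, since $\lambda>\dashint_X|f|\,d\mu=\|f\|_{\cdot,X}$ with the obvious comparison to the Orlicz norm on $X$), for every $x\in\Omega_\lambda$ there is a \emph{largest} cube $Q\in\D$ containing $x$ with $\|f\|_{\Phi,Q}>\lambda$; this uses property $(2)$ of Theorem~\ref{dyadic} (nestedness) together with the fact that chains of cubes through $x$ have measures tending to $\infty$, so the defining inequality must fail eventually. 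Call $\{Q_j\}$ the collection of these maximal cubes. By maximality and nestedness they are pairwise disjoint, they are each contained in $\Omega_\lambda$ (any subcube of $Q_j$ containing a point of $\Omega_\lambda$ is contained in $Q_j$), and conversely $\Omega_\lambda\subset\bigcup_j Q_j$; hence $\Omega_\lambda=\bigcup_j Q_j$.

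For the two-sided bound in $(1)$, the lower bound $\lambda<\|f\|_{\Phi,Q_j}$ is just the selection criterion. For the upper bound I would pass to the dyadic parent $\widehat{Q_j}$ of $Q_j$: by maximality $\|f\|_{\Phi,\widehat{Q_j}}\le\lambda$, and then I would compare $\|f\|_{\Phi,Q_j}$ with $\|f\|_{\Phi,\widehat{Q_j}}$. Property $(4)$ of Theorem~\ref{dyadic} gives $\mu(\widehat{Q_j})\le\epsilon^{-1}\mu(Q_j)$, so $\mu(Q_j)^{-1}\le\epsilon^{-1}\mu(\widehat{Q_j})^{-1}$; by convexity of $\Phi$ and $\Phi(0)=0$ one has $\dashint_{Q_j}\Phi(|f|/\lambda')\,d\mu\le\epsilon^{-1}\dashint_{\widehat{Q_j}}\Phi(|f|/\lambda')\,d\mu\le\dashint_{\widehat{Q_j}}\Phi(\epsilon^{-1}|f|/\lambda')\,d\mu$, which is the same scaling trick used in the proof of Lemma~\ref{lemma:equiv-bump}. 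Taking $\lambda'=\|f\|_{\Phi,\widehat{Q_j}}$ and unwinding the definition of the Orlicz norm yields $\|f\|_{\Phi,Q_j}\le \epsilon^{-1}\|f\|_{\Phi,\widehat{Q_j}}\le\epsilon^{-1}\lambda$, so $C(X)=\epsilon^{-1}$ works.

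For $(2)$, with $a>2/\epsilon$ fixed, the sets $\Omega_k$ are nested and decreasing in $k$, and the maximal cubes $\{Q_j^k\}$ from part $(1)$ satisfy $a^k<\|f\|_{\Phi,Q_j^k}\le C(X)a^k=\epsilon^{-1}a^k$. I would set $E(Q_j^k)=Q_j^k\setminus\Omega_{k+1}$; disjointness of the $E(Q_j^k)$ across all $j,k$ follows because within a fixed level the $Q_j^k$ are disjoint, and across levels if $Q_i^{k+1}\cap Q_j^k\neq\emptyset$ then $Q_i^{k+1}\subset Q_j^k$ (nestedness plus $\Omega_{k+1}\subset\Omega_k$), so $Q_i^{k+1}$ is swallowed and removed from $E(Q_j^k)$. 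The remaining point is $\mu(Q_j^k)\le 2\mu(E(Q_j^k))$, equivalently $\mu(Q_j^k\cap\Omega_{k+1})\le\tfrac12\mu(Q_j^k)$. Here I decompose $Q_j^k\cap\Omega_{k+1}$ into the maximal cubes $Q_i^{k+1}$ it contains; for each such child-generation cube, its \emph{dyadic} parent $\widehat{Q_i^{k+1}}$ lies inside $Q_j^k$ (it cannot be a strictly larger cube in $\Omega_{k+1}$, since $Q_j^k\in\Omega_k\supset\Omega_{k+1}$ would then contradict maximality at level $k+1$ unless $\widehat{Q_i^{k+1}}\subset Q_j^k$), and $\|f\|_{\Phi,\widehat{Q_i^{k+1}}}\le a^{k+1}$ while $\|f\|_{\Phi,Q_j^k}>a^k$. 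This is the technical heart of the argument: I must extract an $L^1$-type summable estimate from the Orlicz condition. The route is to bound $\sum_i\mu(Q_i^{k+1})$ using $a^{k+1}<\|f\|_{\Phi,Q_i^{k+1}}$, which via the generalized H\"older inequality and $t\lesssim\Phi(t)$ (hence $\|g\|_{1,E}\le 2\|g\|_{\Phi,E}$ up to the normalization $\Phi(1)=1$) gives $a^{k+1}\mu(Q_i^{k+1})\lesssim\int_{Q_i^{k+1}}|f|\,d\mu$; summing over the disjoint $Q_i^{k+1}$ inside $Q_j^k$ and using that $\widehat{Q_i^{k+1}}\subset Q_j^k$ has $\|f\|_{\Phi,\widehat{Q_i^{k+1}}}\le a^{k+1}$ — better, using directly that each $Q_i^{k+1}$ itself satisfies $\|f\|_{\Phi,Q_i^{k+1}}\le C(X)a^{k+1}=\epsilon^{-1}a^{k+1}$ from part $(1)$, so $\int_{Q_i^{k+1}}|f|\,d\mu\lesssim a^{k+1}\mu(Q_i^{k+1})$ both ways — one obtains $\sum_i\mu(Q_i^{k+1})\le (C(X)/a)\,\mu(Q_j^k)\cdot(\text{averaged }|f|/a^k\text{ over }Q_j^k)$, and since $\|f\|_{\Phi,Q_j^k}\le\epsilon^{-1}a^k$ bounds that averaged quantity, the total is $\le (C\epsilon^{-1}/a)\mu(Q_j^k)$. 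Choosing $a>2/\epsilon$ (adjusting the absolute constant as needed, or simply taking $a$ large enough) makes this $\le\tfrac12\mu(Q_j^k)$, which is the sparseness bound; Corollary~\ref{cor:LDT-dyadic} is invoked only to guarantee that $|f(x)|\le M_\Phi^\D f(x)$ a.e., so that the $\Omega_k$ genuinely exhaust $\{|f|>0\}$ up to a null set and the stopping construction is not vacuous. Finally, when $\mu(X)<\infty$ the same argument runs verbatim once $\lambda$ (resp.\ $a^k$) exceeds $\dashint_X|f|\,d\mu$, since then $X$ itself is not a stopping cube and every stopping cube has a well-defined parent inside which to compare.
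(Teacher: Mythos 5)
Your construction of the maximal cubes in part $(1)$ and the bound $\|f\|_{\Phi,Q_j}\le\epsilon^{-1}\lambda$ via the parent cube and property $(4)$ of Theorem~\ref{dyadic} are correct, as is the disjointness of the sets $E(Q_j^k)$ across levels. However, the density estimate $\mu(Q_j^k\cap\Omega_{k+1})\le\tfrac12\mu(Q_j^k)$ in part $(2)$ contains a genuine gap, and the way you invoke the selection criterion is logically backwards. You claim that $a^{k+1}<\|f\|_{\Phi,Q_i^{k+1}}$ together with $t\lesssim\Phi(t)$ (i.e.\ $\|g\|_{1,E}\lesssim\|g\|_{\Phi,E}$) yields $a^{k+1}\mu(Q_i^{k+1})\lesssim\int_{Q_i^{k+1}}|f|\,d\mu$. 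But the comparison $\|g\|_{1,E}\lesssim\|g\|_{\Phi,E}$ bounds the $L^1$ average \emph{from above} by the Orlicz norm, so knowing $\|f\|_{\Phi,Q_i^{k+1}}>a^{k+1}$ gives no lower bound whatsoever on $\dashint_{Q_i^{k+1}}|f|$. (Take $\Phi(t)=t^2$ and $f$ concentrated on a small subset of $Q_i^{k+1}$: the $L^2$ average blows up while the $L^1$ average stays small.) Your proposed ``better'' alternative --- using $\|f\|_{\Phi,Q_i^{k+1}}\le\epsilon^{-1}a^{k+1}$ from part $(1)$ --- only bounds $\int_{Q_i^{k+1}}|f|$ from above, which again cannot control $\mu(Q_i^{k+1})$.

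The correct argument never leaves the Orlicz language and never passes to $L^1$ averages of $f$. Since $\|f\|_{\Phi,Q_i^{k+1}}>a^{k+1}$, the very definition of the Orlicz norm forces
\[
\mu(Q_i^{k+1}) < \int_{Q_i^{k+1}}\Phi\!\left(\frac{|f|}{a^{k+1}}\right)d\mu.
\]
Summing over the pairwise disjoint cubes $Q_i^{k+1}\subset Q_j^k$ gives
\[
\mu(Q_j^k\cap\Omega_{k+1}) < \int_{Q_j^k}\Phi\!\left(\frac{|f|}{a^{k+1}}\right)d\mu.
\]
Now use $a>2/\epsilon$, so $a^{k+1}>(2/\epsilon)a^k$; since $\Phi$ is increasing and convex with $\Phi(0)=0$,
\[
\Phi\!\left(\frac{|f|}{a^{k+1}}\right)\le\Phi\!\left(\frac{\epsilon|f|}{2a^k}\right)\le\frac12\,\Phi\!\left(\frac{|f|}{\epsilon^{-1}a^k}\right),
\]
and part $(1)$ gives $\|f\|_{\Phi,Q_j^k}\le\epsilon^{-1}a^k$, i.e.\ $\dashint_{Q_j^k}\Phi\big(|f|/(\epsilon^{-1}a^k)\big)\le 1$. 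Putting these together, $\mu(Q_j^k\cap\Omega_{k+1})\le\tfrac12\mu(Q_j^k)$, which is precisely the sparseness condition $\mu(Q_j^k)\le 2\mu(E(Q_j^k))$. Note also that Corollary~\ref{cor:LDT-dyadic} is not needed in the proof of Theorem~\ref{thm:CZ-cubes} at all: the decomposition of $\Omega_\lambda$ into maximal cubes is a pure stopping-time fact. It is Theorem~\ref{thm:CZ-decomp}, the good/bad function decomposition, that requires the Lebesgue differentiation theorem in order to conclude $|g|\lesssim\lambda$ almost everywhere.
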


The proof of Theorem~\ref{thm:CZ-cubes} is essentially identical to
that in the Euclidean in case: see, for
example,~\cite[Appendix~A.1]{MR2797562}.  The constant $C(X)$ in $(1)$
depends on the doubling constant of $\mu$.  When $\mu(X)<\infty$ some minor modifications to the proof are
necessary; these correspond to what is often referred to as a
``local'' Calder\'on-Zygmund decomposition.  To make them it suffices
to note that in this case $X$ is bounded (see~\cite{MR1791462}).
Therefore, by Theorem~\ref{dyadic}, for all dyadic cubes $Q$
sufficiently large, $X=Q$, and so the argument for $(1)$ still holds
if we take $\lambda>\dashint_X |f(x)|\,d\mu(x) = \dashint_Q
|f(x)|\,d\mu(x)$.

\medskip

\begin{theorem} \label{thm:CZ-decomp} Given an SHT $(X,\rho,\mu)$ such
  that $\mu(X)=\infty$, and a dyadic grid $\D$, suppose $f$ is a
  function such that $\|f\|_{1,Q}\rightarrow 0$ as $\mu(Q)\rightarrow
  \infty$.  Then for any $\lambda>0$ there exists a family
  $\{Q_j\}\subset \D$ and functions $b$ and $g$ such that:
\begin{enumerate}
\item $f=b+g$;

\item $g= f\chi_{\{ x : M^\D f(x)\leq \lambda \}}+ \sum_j f_{Q_j}$;

\item for $\mu$-a.e. $x\in X$, $|g(x)| \leq C(X)\lambda$;

\item $b= \sum_j b_j$, where $b_j = (f-f_{Q_j})\chi_{Q_j}$;

\item $\supp(b_j)\subset Q_j$ and $\dashint_{Q_j}
  b_j(x)\,d\mu(x)=0$. 
\end{enumerate}

If $\mu(X) < \infty$, then this decomposition still exists if we take
$\lambda>\dashint_X |f(x)|\,d\mu(x)$. 
\end{theorem}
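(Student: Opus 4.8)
The plan is to deduce Theorem~\ref{thm:CZ-decomp} directly from the Calder\'on-Zygmund cube decomposition in Theorem~\ref{thm:CZ-cubes}, specialized to $\Phi(t)=t$. First I would apply part (1) of Theorem~\ref{thm:CZ-cubes} with this choice of $\Phi$ (noting that $M_\Phi^\D = M^\D$) and the given $\lambda$: this produces the pairwise disjoint, maximal family $\{Q_j\}\subset\D$ with $\Omega_\lambda = \{M^\D f > \lambda\} = \bigcup_j Q_j$ and $\lambda < \dashint_{Q_j}|f|\,d\mu \le C(X)\lambda$ for each $j$. One should observe at the outset that the hypothesis $\|f\|_{1,Q}\to 0$ as $\mu(Q)\to\infty$ is exactly the $\Phi(t)=t$ case of the hypothesis of Theorem~\ref{thm:CZ-cubes}, so that theorem applies verbatim; the same remark handles the $\mu(X)<\infty$ case, where one takes $\lambda > \dashint_X |f|\,d\mu$.

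Next I would simply \emph{define} $g$ and $b$ by the formulas in (2) and (4), set $b_j = (f-f_{Q_j})\chi_{Q_j}$, and then verify properties (1), (4), (5) as essentially formal bookkeeping. Property (1), $f = b+g$, follows because on $\Omega_\lambda^c = \{M^\D f \le \lambda\}$ we have $g = f$ and $b = 0$, while on each $Q_j$ (and these are disjoint and cover $\Omega_\lambda$) we have $g = f_{Q_j}$ and $b_j = f - f_{Q_j}$, so their sum is $f$; here one uses the pairwise disjointness from Theorem~\ref{thm:CZ-cubes}(1). Property (5) is immediate: $\supp(b_j)\subset Q_j$ by construction, and $\dashint_{Q_j} b_j\,d\mu = f_{Q_j} - f_{Q_j} = 0$. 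Property (4) is just the definition of $b$.

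The one substantive point, and the step I expect to require the most care, is property (3): the pointwise bound $|g(x)|\le C(X)\lambda$ for $\mu$-a.e.\ $x$. On the ``good set'' $\Omega_\lambda^c$ this is immediate from the Lebesgue differentiation theorem in the dyadic form of Corollary~\ref{cor:LDT-dyadic} (and the definition of $M^\D$): for a.e.\ $x\in\Omega_\lambda^c$, $|f(x)| = \lim_k \dashint_{Q_k}|f|\,d\mu \le M^\D f(x) \le \lambda$, where $\{Q_k\}$ is the nested sequence of dyadic cubes shrinking to $x$. On each $Q_j$, $g$ equals the constant $f_{Q_j}$, and $|f_{Q_j}|\le \dashint_{Q_j}|f|\,d\mu \le C(X)\lambda$ by the upper bound in Theorem~\ref{thm:CZ-cubes}(1); since the $Q_j$ are disjoint and exhaust $\Omega_\lambda$, this gives the bound a.e.\ on $\Omega_\lambda$ as well. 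Combining the two regions yields (3) with the constant $C(X)$ coming from the doubling constant of $\mu$ via Theorem~\ref{thm:CZ-cubes}. For the case $\mu(X)<\infty$ one repeats the same argument using the ``local'' version of Theorem~\ref{thm:CZ-cubes}, the only change being the restriction $\lambda > \dashint_X|f|\,d\mu$. Since the argument is, as in the Euclidean case, a routine transcription once Theorem~\ref{thm:CZ-cubes} and Corollary~\ref{cor:LDT-dyadic} are in hand, I would present it briefly and refer to~\cite[Appendix~A]{MR2797562} for the parallel Euclidean details.
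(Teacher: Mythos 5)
Your proposal is correct and is essentially the proof the paper has in mind: the paper simply notes that the result is proved as in the Euclidean case by taking $\{Q_j\}$ from part (1) of Theorem~\ref{thm:CZ-cubes} and invoking Corollary~\ref{cor:LDT-dyadic} for the $L^\infty$ bound on $g$, which is exactly what you do. Your verification of (3) on the two regions and the handling of the $\mu(X)<\infty$ case match the paper's intent.
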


Theorem~\ref{thm:CZ-decomp} is proved exactly as in the Euclidean
case, taking $\{Q_j\}$ to be the cubes from $(1)$ in
Theorem~\ref{thm:CZ-cubes}.  The proof that $g\in L^\infty$ requires
the Lebesgue differentiation theorem, Corollary~\ref{cor:LDT-dyadic}.

\section{Reduction to estimates for sparse operators}
\label{sec:step1-step2}

Given a dyadic grid $\D$ and sparse family $S$ in $\D$,  define the sparse
operator $T^S=T^{S,\D}$ by
 \[T^Sf(x) = \sum_{Q\in S}\left(\dashint_Q f \,d\mu\right)\cdot \chi_Q(x).\]
The operator $T^S$ is a positive, dyadic
Calder\'on-Zygmund operator.   It follows from the definition of
sparseness that $T^S$ is bounded on $L^2(\mu)$ and satisfies a weak
$(1,1)$ inequality:  see~\cite[Lemmas~6.4,~6.5]{AV-2012}.

\smallskip

A key feature of our proofs is that we reduce the problem for
Calder\'on-Zygmund operators to proving the same
estimates for sparse operators.  To do so we need to extend two results from the
Euclidean setting to spaces of homogeneous type.  The first result is due to
Lerner~\cite{Lern2012} in the Euclidean setting; it was central to his
greatly simplified proof of the $A_2$ conjecture.  We defer the proof
until the end of this section.

\begin{theorem} \label{thm:lerner}
Given an SHT $(X,\rho,\mu)$ and a Calder\'on-Zygmund operator $T$,
then for any Banach function space $Y$, 
\begin{equation} \label{eqn:lerner1}
\|T(f\s)\|_Y\leq C(X,T)\sup_{\D,S} \|T^S(f\s)\|_Y,
\end{equation}
where the supremum is taken over every dyadic grid $\D$ in $(X,\rho,\mu)$
and over every sparse family $S$ in $\D$. 
\end{theorem}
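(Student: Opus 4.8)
The plan is to follow Lerner's strategy, now adapted to a space of homogeneous type by means of the dyadic grids from Theorem~\ref{dyadic}. The key tool is a pointwise domination of $|T(f\sigma)|$ by a finite sum of sparse operators (over finitely many dyadic grids), together with a local mean oscillation estimate for Calder\'on--Zygmund operators. First I would recall the local mean oscillation formula: for a suitable function $h$ and a fixed dyadic grid $\D$, one has a pointwise identity expressing $h$ in terms of its median on a maximal cube plus a sum over a sparse subfamily of the oscillations $\omega_\lambda(h;Q)$. The mean oscillation of $T(f\sigma)$ on a cube $Q$ is then controlled, via the standard Calder\'on--Zygmund kernel estimates (the decay and smoothness conditions stated in the excerpt, together with doubling), by the average of $f\sigma$ over a fixed dilate $\lambda Q$ — here is where the balls $\lambda Q = B(x_c(Q),\lambda C\eta^k)$ introduced after Theorem~\ref{dyadic} enter. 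This gives the bound $\omega_\lambda\big(T(f\sigma);Q\big)\lesssim \dashint_{\lambda Q} f\sigma\,d\mu$ for appropriate $\lambda$.

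Next I would pass from the dilated balls back to honest dyadic cubes. By the second property in Lemma~\ref{lemma:equiv-bump}'s proof (the adjacent-grid construction of \cite[Theorem~4.1]{MR2901199}), each ball $\lambda Q$ is contained in a cube $Q'$ belonging to one of finitely many auxiliary grids $\D^1,\dots,\D^J$, with $\mu(Q')\approx\mu(\lambda Q)\approx \mu(Q)$. Thus each oscillation term is dominated by $\dashint_{Q'} f\sigma\,d\mu$ with $Q'\in\D^j$ for some $j$. Combining this with the local mean oscillation decomposition of $T(f\sigma)$ over $\D$, and using that the family of cubes appearing is sparse, yields a pointwise estimate $|T(f\sigma)(x)|\lesssim \sum_{j=1}^{J} T^{S_j,\D^j}(f\sigma)(x)$ for sparse families $S_j\subset\D^j$. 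Taking the $Y$-norm of both sides and using the triangle inequality in the Banach function space $Y$ (with $J=J(X)$ absorbed into the constant) gives \eqref{eqn:lerner1}. Strictly speaking one should first argue for a dense class of $f$ and for $T$ replaced by a truncation, so that all the sums are finite and the median identity applies; the passage to the general case is routine once the estimate is uniform.

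The main obstacle is the local mean oscillation estimate for $T$ itself in the SHT setting: one must verify that the classical argument bounding $\omega_\lambda(Tg;Q)$ by an average of $g$ over a dilate of $Q$ goes through using only the stated kernel conditions and doubling, being careful that the ``cubes'' $Q$ are genuinely balls up to comparable measure (property~(5) of Theorem~\ref{dyadic}) and that quasi-triangle-inequality constants do not accumulate. I expect this to be technical but not conceptually new; a secondary point requiring care is ensuring the auxiliary grids $\D^j$ can be chosen to simultaneously carry sparse families, which follows because sparseness is inherited by the Calder\'on--Zygmund selection process of Theorem~\ref{thm:CZ-cubes} applied within each grid. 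Since the problem statement already allows invoking everything proved earlier, the write-up can lean on Theorems~\ref{dyadic}, \ref{thm:CZ-cubes} and Lemma~\ref{lemma:equiv-bump}, and need only carry out the oscillation estimate and the bookkeeping of the finite sum over grids in detail.
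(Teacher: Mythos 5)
Your plan diverges from the paper's argument in a way that introduces a real gap.

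The paper's proof does begin, as you suggest, with a local mean oscillation decomposition imported from~\cite{AV-2012}. But the pointwise estimate it actually produces (inequality~\eqref{repT}) has the form
\[
|Tf(x)|\lesssim Mf(x)+\sum_{i=1}^{\infty}2^{-i\eta}\,A_if(x),
\qquad A_if(x)=\sum_{Q\in S_N}\dashint_{2^iQ}f\,d\mu\cdot\chi_Q(x),
\]
i.e., the oscillation of $Tf$ on a dyadic cube $Q$ is bounded by an \emph{infinite geometric series} of averages over the dilates $2^iQ$, not by an average over a single fixed dilate $\lambda Q$. Your claim $\omega_\lambda(T(f\sigma);Q)\lesssim\dashint_{\lambda Q}f\sigma\,d\mu$ is what the smoothness condition would give only if the kernel were supported near the diagonal; for a genuine Calder\'on--Zygmund kernel one must split off an annulus $2^{m+1}Q\setminus 2^mQ$ for each $m$ and sum, and the tail cannot be collapsed into a single local average without losing to a (non-sparse) maximal-type expression. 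This is the central technical difficulty, and your write-up elides it.

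Because of this, the next step in your plan --- replacing each dilate by an adjacent dyadic cube $Q'$ in one of finitely many grids and asserting a pointwise domination $|T(f\sigma)(x)|\lesssim\sum_{j}T^{S_j,\D^j}(f\sigma)(x)$ --- does not follow. Even with the adjacent-grid trick, you would be replacing a sparse family $\{Q\}$ by a new family $\{Q'\}$ in other grids, and sparseness is \emph{not} automatically inherited by this substitution: different $Q$'s can map to the same or to heavily overlapping $Q'$'s, and the $Q'$ associated to $2^iQ$ has measure comparable to $\mu(2^iQ)\gg\mu(Q)$, so the disjoint-majority-sets definition of sparseness can fail badly. Your suggestion that this is taken care of ``because sparseness is inherited by the Calder\'on--Zygmund selection process'' is not a correct justification; Theorem~\ref{thm:CZ-cubes} generates a sparse family from level sets of a maximal operator, which is a different mechanism altogether. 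The paper sidesteps the pointwise-domination issue entirely: after bounding $I_1$ pointwise via a sparse domination of $M$, it treats the tail $\sum_i 2^{-i\eta}A_if$ by \emph{duality}, passing to the adjoint operators $B_{i,j}^\ast$ (which are weak $(1,1)$ with constant growing only linearly in $i$), dominating those pointwise by sparse operators, and then undoing the duality. The geometric factor $2^{-i\eta}$ then absorbs the linear growth. This gives the norm estimate~\eqref{eqn:lerner1} without ever claiming a pointwise sparse bound for $T$ itself.

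In short: your instinct to aim for a pointwise sparse domination is in the spirit of later refinements of Lerner's theorem, but as written the oscillation estimate is wrong (missing the tail series) and the sparseness-preservation step is unjustified. To repair the argument along the paper's lines you would keep the infinite series in the oscillation bound, decompose each $A_i$ across adjacent grids, and then either carry out the duality argument with the adjoints $B_{i,j}^\ast$ as the paper does, or invoke a genuinely new argument establishing that the resulting families can be re-sparsified --- which is a nontrivial lemma you would need to state and prove, not a routine consequence of Theorem~\ref{thm:CZ-cubes}.
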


\smallskip

By taking
$Y$ equal to $L^{p,\infty}(u)$ or $L^p(u)$, 
it follows immediately from Theorem~\ref{thm:lerner} that to prove
estimates for Calder\'on-Zygmund operators, it suffices to prove them for all sparse operators $T^S$ with
constants independent of $S$ and $\D$.     Below, we will prove
Theorems~\ref{thm:double}, \ref{thm:main}  by establishing such estimates.

To prove Theorem~\ref{thm:main-strong} we need to argue indirectly
using a result which connects the weak and strong type norm
inequalities of sparse operators.   In the Euclidean case this
theorem is due to Lacey, Sawyer and Uriarte-Tuero~\cite{LacSawUT2010}.

 \begin{theorem} \label{thm:LSUT}
Given an SHT $(X,\rho,\mu)$, let $\D$ be a dyadic grid and $S$ a
sparse family in $\D$.  Then
\begin{equation} \label{eqn:lacey-strong}
\|T^S(\cdot\s)\|_{L^p(\s)\to L^p(u)}\approx 
\|T^S(\cdot\s)\|_{L^p(\s)\to L^{p,\infty}(u)} +
\|T^S(\cdot u)\|_{L^{p'}(u)\to L^{p',\infty}(\s)}
\end{equation}
The constants in the equivalence depend only on $X$,
$T$ and $p$; in particular they are independent of $\D$ and $S$.  
\end{theorem}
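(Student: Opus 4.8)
The plan is to prove Theorem~\ref{thm:LSUT} by the standard duality/Carleson-embedding method used by Lacey, Sawyer and Uriarte-Tuero in the Euclidean case, taking advantage of the fact that a sparse operator is formally self-adjoint with respect to the unweighted pairing: writing $\langle f,g\rangle = \int_X fg\,d\mu$, one has $\langle T^S(f\sigma),g u\rangle = \langle f\sigma, T^S(gu)\rangle$ because both sides equal $\sum_{Q\in S}\mu(Q)\big(\dashint_Q f\sigma\,d\mu\big)\big(\dashint_Q gu\,d\mu\big)$. The ``$\gtrsim$'' direction is immediate since the strong type norm dominates the weak type norm for $T^S(\cdot\sigma)$, and by the duality identity it also dominates the dual weak type norm for $T^S(\cdot u)$. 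So the whole content is the ``$\lesssim$'' direction: assuming both weak type bounds hold with constants $\mathcal{N}_1$ and $\mathcal{N}_1^*$, we must bound $\|T^S(\cdot\sigma)\|_{L^p(\sigma)\to L^p(u)}$.

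For that direction I would fix nonnegative $f\in L^p(\sigma)$, $g\in L^{p'}(u)$, and estimate $\langle T^S(f\sigma),gu\rangle = \sum_{Q\in S}\mu(Q)\langle f\rangle_Q^\sigma\,\langle g\rangle_Q^u\,\sigma(Q)u(Q)/\mu(Q)^2$, where $\langle f\rangle_Q^\sigma = \sigma(Q)^{-1}\int_Q f\sigma\,d\mu$ and similarly for $g$. Split the sum according to which of the two weighted averages is larger, say into the part where $\langle f\rangle_Q^\sigma\,\sigma(Q)^{1/p}/\mu(Q)^{?}\ge \langle g\rangle_Q^u\,\dots$ — more precisely one splits on $\sigma(Q)\langle f\rangle_Q^\sigma \lessgtr$ a comparable quantity so that one of the two factors is ``already summed.'' On the principal half one runs a stopping-time argument over $S$ to extract a further sparse subfamily $\mathcal{F}$ on which the weighted averages $\langle f\rangle_Q^\sigma$ are roughly constant on the stopping generations, so that $\sum_{Q}\mu(Q)(\cdots)$ telescopes into $\sum_{F\in\mathcal{F}}(\langle f\rangle_F^\sigma)^p\sigma(F)\cdot(\text{Carleson-type sum of the }g\text{ terms over the tree below }F)$. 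The inner Carleson sum is controlled precisely by the dual weak type hypothesis: the dual testing/weak bound for $T^S(\cdot u)$ says exactly that $\sum_{Q\subset F}\mu(Q)\langle g\rangle_Q^u\,\sigma(Q)u(Q)/\mu(Q)^2 \lesssim \mathcal{N}_1^*\,\|g\chi_F\|_{L^{p'}(u)}\,\sigma(F)^{1/p}$ (this is the Carleson embedding theorem phrased in dual form), and then $\sum_F (\langle f\rangle_F^\sigma)^p\sigma(F)\lesssim \mathcal{N}_1^p\|f\|_{L^p(\sigma)}^p$ follows from the sparseness of $\mathcal{F}$ and the weak type bound for $T^S(\cdot\sigma)$ via its own Carleson-embedding formulation. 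Summing over the two halves and taking the supremum over $g$ with $\|g\|_{L^{p'}(u)}\le 1$ gives the claim.

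The two ingredients from the SHT machinery that make all of this go through verbatim are already available in this paper: the stopping-time construction is just an application of the dyadic Calder\'on-Zygmund decomposition of Theorem~\ref{thm:CZ-cubes}/\ref{thm:CZ-decomp} relative to the weighted averages $\langle f\rangle^\sigma$ (one checks $\sigma$ is a locally integrable ``measure'' on $X$ and the maximal function $M^\D_\sigma$ is weak $(1,1)$ with respect to $\sigma\,d\mu$, which follows from doubling and the dyadic structure of Theorem~\ref{dyadic}), and the Carleson embedding theorem for dyadic cubes in an SHT, which is proved exactly as in $\R^n$ from the weak $(1,1)$ bound for the dyadic maximal operator and the properties of sparse families. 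Since $T^S$ is genuinely local and positive, there are no kernel estimates or smoothness issues to worry about, so no feature of $X$ beyond the doubling property and the grid of Theorem~\ref{dyadic} enters.

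The main obstacle, and the only place real care is needed, is the bookkeeping in the stopping-time/telescoping step: making sure the stopping family $\mathcal{F}$ is sparse with constants depending only on $X$, that the ``non-stopping'' cubes $Q$ between consecutive stopping generations contribute a geometric series in the stopping parameter that sums, and that the pieces of $f$ and $g$ are correctly localized to the stopping trees so that the two Carleson embeddings can be applied with the $L^p(\sigma)$ and $L^{p'}(u)$ norms split additively over $\mathcal{F}$. This is routine in $\R^n$ but must be transcribed with the SHT dyadic grid, using property (4) of Theorem~\ref{dyadic} (children have comparable measure) in place of the factor-$2^n$ estimates one uses in the Euclidean argument. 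I expect to be able to keep the write-up short by citing~\cite{LacSawUT2010} for the structure of the argument and only indicating the substitutions needed for an SHT, in the same spirit as the treatment of Theorems~\ref{thm:CZ-cubes} and~\ref{thm:CZ-decomp} above.
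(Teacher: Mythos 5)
Your overall strategy---reduce to the two-weight theory for positive dyadic operators and run the Sawyer-testing / stopping-time / Carleson-embedding machinery---is the same one the paper uses; the paper, however, handles this almost entirely by citation. It factors \eqref{eqn:lacey-strong} through Sawyer-type testing conditions: the equivalence of the weak-type norms with the two testing conditions is lifted verbatim from \cite[Section~2.2]{LacSawUT2010}, after observing that only the properties of Theorem~\ref{dyadic} are used; the much harder equivalence of the \emph{strong}-type norm with the pair of testing conditions is handled by invoking Treil's simplified proof~\cite{treil-2012}, which Treil explicitly notes carries over to a space of homogeneous type. Combining the two equivalences gives the claim. Your proposal instead aims to go from the weak bounds to the strong bound in one pass, which is fine in spirit, but it makes the hard step more exposed, not less.

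And that hard step is exactly the one you describe as ``routine bookkeeping.'' After extracting a $\sigma$-sparse stopping family $\mathcal{F}$ on which $\langle f\rangle_Q^\sigma\approx\langle f\rangle_F^\sigma$, you want to apply H\"older to
$\sum_F \langle f\rangle_F^\sigma\,\|g\chi_F\|_{L^{p'}(u)}\,\sigma(F)^{1/p}
\le \bigl(\sum_F(\langle f\rangle_F^\sigma)^p\sigma(F)\bigr)^{1/p}\bigl(\sum_F \|g\chi_F\|_{L^{p'}(u)}^{p'}\bigr)^{1/p'}$,
but the stopping cubes $F$ are nested, not disjoint, so the second factor does \emph{not} bound $\|g\|_{L^{p'}(u)}$. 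Resolving this overlap---by a parallel (double) corona in both $f$ and $g$, with the inner Carleson sums localized to the disjointified shells of the two stopping trees, or by Treil's version of the argument---is precisely what makes the strong-type direction of \cite{LacSawUT2010} ``much more involved'' and what Treil's note streamlines. It is not a transcription issue from $\R^n$ to an SHT (you are right that doubling and Theorem~\ref{dyadic} supply everything analytic); it is the mathematical crux, and a write-up in your style would need to either spell it out or cite~\cite{treil-2012} the way the paper does. A minor slip as well: the factor $\mathcal{N}_1^p$ in your estimate $\sum_F(\langle f\rangle_F^\sigma)^p\sigma(F)\lesssim\mathcal{N}_1^p\|f\|_{L^p(\sigma)}^p$ should not appear; this is the $\sigma$-weighted Carleson embedding and its constant depends only on the $\sigma$-sparseness of $\mathcal{F}$, not on the weak-type norm.
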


The proof of Theorem~\ref{thm:LSUT} passes through the equivalence of
the weak and strong type inequalities to certain testing conditions.
The proof of this equivalence for weak type inequalities in an SHT is the same as in
\cite[Section~2.2]{LacSawUT2010} in the Euclidean setting; it is
straightforward to see that the only properties of dyadic cubes used
in the proof are the those given in Theorem~\ref{dyadic}.   
The proof of this equivalence  for strong type inequalities in~\cite{LacSawUT2010} is much
more involved; however, a simpler proof was recently given by
Treil~\cite{treil-2012} and as he notes (see Section~5 of his paper),
this proof also extends to an SHT with essentially no change.

\smallskip

Given Theorem~\ref{thm:LSUT},  Theorem~\ref{thm:main-strong}
follows from the characterization of the weak type inequality in
Theorem~\ref{thm:main}.   We will make this precise in
Section~\ref{sec:step5} below.
\medskip

\subsection*{Proof of Theorem~\ref{thm:lerner}}
Our proof draws heavily on the results proved in~\cite{AV-2012} and we
refer the reader there for complete details.  

By our assumptions on $f$ and $\sigma$ we can, for clarity, replace $f\sigma$ by
$f$.  As was proved
in~\cite[Proposition~4.3]{MR2901199}, if we fix a point $x_0\in X$, we
can construct a dyadic grid $\D^*$ satisfying Theorem~\ref{dyadic} that
contains a sequence of nested dyadic cubes $\{Q_N\}$ such that $x_0$
is the center of each cube $Q_N$ and such that $\bigcup_N Q_N = X$.
Therefore, by duality and Fatou's lemma, there exists $g$ in the
associate space $Y'$, $\|g\|_{Y'}=1$, such that
\[ \|Tf\|_Y = \int_X |Tf(x)|g(x)\,d\mu(x)
\leq \liminf_{N\rightarrow \infty} \int_{Q_N} |Tf(x)|g(x)\,d\mu(x). \]
Fix $N>0$; we will prove that the final integral is bounded by $C\sup
\|T^S f\|_Y$, where the supremum is taken over some collection of
$S$ and $\D$, but the constant is independent of these and also
independent of $N$.  

As was proved in~\cite[Section~5]{AV-2012}, there exist
$C_1,\,C_2,\,\eta>0$ such that for $\mu$-a.e.
$x\in Q_N$,  
\begin{equation} \label{repT}
|Tf(x)|\leq
C_1 Mf(x)+ C_2\sum_{i=1}^{\infty}
\frac{1}{2^{i\eta}} A_if(x),
\end{equation}
where \[A_if(x) = \sum_{Q\in S_N} \dashint_{2^i
  Q}f(y)\,d\mu(y)\cdot\chi_{Q}(x)
\]
and $S_N$ is a sparse subset of $\D^*$ that consists of dyadic
sub-cubes of $Q_N$.  The constants depend only on $X$ and $T$; in
particular $C_1$ depends on the fact (see \cite{MR1104656}) that $T :
L^1(X,\mu)\rightarrow L^{1,\infty}(X,\mu)$.  Therefore, by H\"older's
inequality (with respect to $Y$ and $Y'$),
\[ \int_{Q_N} |Tf(x)|g(x)\,d\mu(x) \leq C_1\|Mf\cdot \chi_{Q_N}\|_Y +
C_2\sum_i 2^{-i\eta} \|A_i f\cdot \chi_{Q_N}\|_Y=I_1+I_2. \]

To estimate $I_1$ 
we give a pointwise estimate for $Mf(x)$.
By~\cite[Theorem~7.9]{MR2901199} there exists a constant $K=K(X)$ and
a collection
$\D^1,\ldots,\D^K$ of dyadic grids such that for every $x\in X$,
\begin{equation} \label{eqn:lerner2}
Mf(x) \leq C(X)\sum_{k=1}^K M^kf(x),
\end{equation}
where $M^k=M^{\D^k}$ is the dyadic maximal operator defined with
respect to $\D^k$.
We claim that for each $k$ there exists a sparse subset $S_k$
(depending on $f$) such that 
\begin{equation} \label{eqn:lerner3}
M^kf(x) \leq C(X)T^{S_k}f(x).
\end{equation}
This follows from $(2)$ in Theorem~\ref{thm:CZ-cubes}.
With the notation of this result, let $S_k=\{Q_j^i\}\in\D^k$ be the
sparse family.  Then given $x\in \Omega_i\setminus \Omega_{i+1}$,
there exists $Q^i_j$ such that
\[ Mf(x) \leq a^{i+1} < a\dashint_{Q_j^i} f(y)\,d\mu(y); \]
hence, for $\mu$-a.e. $x$,
\[ M^kf(x) \leq a\sum_{i,j} \dashint_{Q_j^i} f(y)\,d\mu(y) \cdot
\chi_{Q_j^i}(x) = aT^{S_k}f(x). \]

If we now combine inequalities~\eqref{eqn:lerner2}
and~\eqref{eqn:lerner3}, we have that
\[ I_1 \leq C(T,X)\|Mf\|_Y \leq  C(T,X)\sum_{k=1}^K \|T^{S_k}f\|_Y \leq C(T,X)
\sup_{\D,S}\|T^S f\|_Y. \]

\medskip

To estimate $I_2$ we will decompose each $A_i f$ and
apply duality.    By~\cite[Theorem~4.1]{MR2901199} there exists a
family of dyadic grids $\D^1,\ldots,\D^J$, satisfying the properties
of Theorem~\ref{dyadic} with the additional property that given any
ball $B_\rho$, there exists $j$ and $Q^*\subset \D^j$ such that
$B_\rho\subset Q^*$ and $\mu(B_\rho) \approx
\mu(Q^*)$, with constants depending only on $X$.  Recall (see the
discussion after Theorem~\ref{dyadic}) that $2^iQ$ is defined to be a
ball.  Therefore, if we define
\[ S_N^j = \{ Q\in S_N : \exists Q^* \in \D^j, 2^iQ \subset Q^*\}, \]
then
\[ A_i f(x) \leq C(X) \sum_{j=1}^J \sum_{\substack{Q\in S_N^j\\2^i Q
    \subset Q^*}} \dashint_{Q^*} f(y)\,d\mu(y)\cdot \chi_Q(x)
= C(X) \sum_{j=1}^J B_{i,j}f(x). \]

Arguing as in~\cite[Section~6]{AV-2012} (see especially Lemmas~6.5
and~6.13) we apply the same argument used to prove~\eqref{repT} for
$T$ to the adjoint operators $B_{i,j}^*$.   Key to this is the fact
that adjoint operators are weak $(1,1)$ with a constant that is linear
in $i$.  This yields the following pointwise estimate:
\begin{multline*}
 B_{i,j}^*f(x) \leq iC_1(X) Mf(x) + i C_2(X) \sum_{Q\in S^j_*}
\dashint_Q f(y)\,d\mu(y)\cdot \chi_Q(x)  
\\ = iC_1(X) Mf(x) + i C_2(X)
T_{i,j}f(x),
\end{multline*}
where $S^j_*\subset \D^j$ is sparse.  Therefore, repeating
the above argument for bounding the maximal operator, we have that
$B_{i,j}^*$ is bounded pointwise by a finite sum of sparse operators $T^{S^l}$,
$1\leq l \leq L$
(defined with respect to a finite collection of dyadic grids $\D$).
We can now estimate $I_2$ by duality using the fact that the operators
$T^{S^l}$ are self-adjoint:  there exists a collection of
$g_i \in Y'$, $\|g_i\|_{Y'}=1$, such that 
\begin{align*}
I_2 
& = C(T,X) \sum_i 2^{-i\eta} \|A_i f \cdot \chi_{Q_N}\|_Y  \\
& = C(T,X)  \sum_i 2^{-i\eta} \int_{Q_N} A_if(x) g_i(x)\,d\mu(x) \\
& \leq C(T,X)  \sum_i 2^{-i\eta} \sum_{j=1}^J \int_{Q_N} B_{i,j}f(x) g_i(x)\,d\mu(x) \\
& \leq C(T,X)  \sum_i 2^{-i\eta} \sum_{j=1}^J \int_X f(x) B_{i,j}^*g_i(x)\,d\mu(x)
\\
& \leq C(T,X)  \sum_i i2^{-i\eta} \sum_{j=1}^J \sum_{l=1}^L \int_X
f(x) T^{S^l}g_i(x)\,d\mu(x) \\
& =  C(T,X)  \sum_i i2^{-i\eta} \sum_{j=1}^J \sum_{l=1}^L \int_X
T^{S^l}f(x) g_i(x)\,d\mu(x) \\
& \leq C(T,X) \sum_i i2^{-i\eta} \sum_{j=1}^J \sum_{l=1}^L
\|T^{S^l}f\|_Y \|g_i\|_{Y'} \\
& \leq C(T,X) \sup_{\D,S} \|T^S f\|_Y.
\end{align*}
This completes the proof of Theorem~\ref{thm:lerner}.

\section{Proof of Theorem~\ref{thm:double}}
\label{sec:double}

We will prove this result for sparse operators, with the
$[u,\sigma]_{A,B,p}$ condition replaced by the $[u,\sigma]_{A,B,p}^\D$
condition.  Theorem~\ref{thm:double} then follows immediately by
Theorem~\ref{thm:LSUT} and Lemma~\ref{lemma:equiv-bump}.  The proof
for sparse operators is essentially identical to the proof in the Euclidean
case in~\cite{dcu-martell-perez}; for the convenience of the reader we
sketch the details.

We need one preliminary result.  In the Euclidean case this is due to 
P\'erez~\cite{perez95}, and in an SHT to P\'erez and
Wheeden~\cite{MR1818113} and Pradolini and
Salinas~\cite{MR2022366}.   In the latter papers the proofs are for
maximal operators defined with respect to balls instead of dyadic
cubes, but the proofs rely on a version of Theorem~\ref{thm:CZ-cubes}
for balls and so immediately adapt to this setting.  

\begin{lemma} \label{lemma:orlicz-max}
Given an SHT $(X,\rho,\mu)$ and  a Young function $\Phi$ such that $\Phi\in B_p$, 
then 
\[ \|M_\Phi^\D f\|_{L^p(\mu)} \leq 
C(X) [\Phi]_{B_p}^{1/p}\| f\|_{L^p(\mu)}. \]
\end{lemma}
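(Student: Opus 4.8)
The plan is to follow the classical argument of P\'erez~\cite{perez95} for the boundedness of Orlicz maximal operators, using the dyadic Calder\'on--Zygmund decomposition of Theorem~\ref{thm:CZ-cubes} in place of the decomposition with respect to balls used in~\cite{MR1818113,MR2022366}. Two harmless reductions come first. By density it suffices to treat bounded $f$ of compact support; for such $f$ (and in fact for any $f\in L^p(\mu)$, since a Young function in $B_p$ satisfies $\Phi(t)\lesssim t^p$ for large $t$) one has $\|f\|_{\Phi,Q}\to 0$ as $\mu(Q)\to\infty$, so Theorem~\ref{thm:CZ-cubes} is available. Also, since the target constant only involves $[\Phi]_{B_p}$ and the normalization of $\Phi$, I would first replace $\Phi$ by $\Phi/(2\Phi(1))$, which (by the convexity bound $\Phi(\theta t)\le\theta\Phi(t)$, $0\le\theta\le1$) changes $M_\Phi^\D f$ and $[\Phi]_{B_p}^{1/p}$ only by comparable factors; this lets us assume $\Phi(1)\le\tfrac12$.

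Assume first $\mu(X)=\infty$. For each $\lambda>0$, part~(1) of Theorem~\ref{thm:CZ-cubes} writes $\Omega_\lambda=\{M_\Phi^\D f>\lambda\}=\bigcup_j Q_j$ with the cubes $Q_j\in\D$ pairwise disjoint and $\|f\|_{\Phi,Q_j}>\lambda$, hence $\dashint_{Q_j}\Phi(|f|/\lambda)\,d\mu>1$ by the definition of the Orlicz norm. Splitting this average over $\{|f|\le\lambda\}$ and $\{|f|>\lambda\}$ and using that $\Phi(|f|/\lambda)\le\Phi(1)\le\tfrac12$ on the first set, we get $\mu(Q_j)\le 2\int_{Q_j\cap\{|f|>\lambda\}}\Phi(|f(x)|/\lambda)\,d\mu(x)$; summing over the disjoint $Q_j$ yields the distributional bound
\[
\mu(\Omega_\lambda)\le 2\int_{\{|f|>\lambda\}}\Phi\!\left(\frac{|f(x)|}{\lambda}\right)d\mu(x).
\]

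Integrating in $\lambda$ via the layer-cake formula, then applying Tonelli's theorem and the change of variables $t=|f(x)|/\lambda$ (which turns the inner $\lambda$-integral, now over $(0,|f(x)|)$, into $|f(x)|^p\int_1^\infty \Phi(t)t^{-p}\,dt/t$), one arrives at
\[
\|M_\Phi^\D f\|_{L^p(\mu)}^p=p\int_0^\infty \lambda^{p-1}\mu(\Omega_\lambda)\,d\lambda\le 2p\,[\Phi]_{B_p}\int_X |f(x)|^p\,d\mu(x),
\]
so $\|M_\Phi^\D f\|_{L^p(\mu)}\le(2p)^{1/p}[\Phi]_{B_p}^{1/p}\|f\|_{L^p(\mu)}$; since $(2p)^{1/p}$ is bounded for $p>1$, undoing the normalization gives the stated estimate with a constant depending only on $p$ and the normalization of $\Phi$ (hence on $X$, and on nothing else under the standing convention $\Phi(1)=1$). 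If instead $\mu(X)<\infty$, Theorem~\ref{thm:CZ-cubes}(1) applies only for $\lambda>\dashint_X|f|\,d\mu$; the small-$\lambda$ part contributes at most $\int_0^{\dashint_X|f|}\lambda^{p-1}\mu(X)\,d\lambda\le\tfrac1p\|f\|_{L^p(\mu)}^p$ by Jensen's inequality, which is $\lesssim [\Phi]_{B_p}\|f\|_{L^p(\mu)}^p$ since $[\Phi]_{B_p}\gtrsim p^{-1}$.

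The only step requiring real care is the placement of the truncation in the distributional estimate: the cutoff must sit exactly at the level $\lambda$, so that after Tonelli and the substitution $t=|f|/\lambda$ the inner integral is precisely $[\Phi]_{B_p}=\int_1^\infty\Phi(t)t^{-p}\,dt/t$, rather than $\int_s^\infty$ with some $s<1$, which would introduce an extra $\Phi$-dependent factor not controlled by $[\Phi]_{B_p}$. That is exactly the reason for normalizing $\Phi$ so that $\Phi(1)\le\tfrac12$ at the outset. Everything else in the argument — the dyadic grid, the Calder\'on--Zygmund decomposition tailored to $M_\Phi^\D$, and the Lebesgue differentiation theorem it relies on — is already established in Theorems~\ref{dyadic} and~\ref{thm:CZ-cubes}, so no difficulty specific to spaces of homogeneous type appears.
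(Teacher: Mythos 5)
Your proof is correct and complete. Note that the paper itself does not prove this lemma — it simply cites P\'erez~\cite{perez95} for the Euclidean case and P\'erez--Wheeden~\cite{MR1818113}, Pradolini--Salinas~\cite{MR2022366} for the SHT case (with balls), remarking that those proofs adapt to dyadic grids via Theorem~\ref{thm:CZ-cubes} and that the $\Phi(2t)\lesssim\Phi(t)$ assumption in the cited papers can be dropped. What you have written is a self-contained version of precisely that classical argument, carried out directly in the dyadic setting: the Calder\'on--Zygmund decomposition for $M_\Phi^\D$ from Theorem~\ref{thm:CZ-cubes}(1), the lower bound $\dashint_{Q_j}\Phi(|f|/\lambda)\,d\mu>1$ on each stopping cube, the split at level $\lambda$ to obtain the distributional estimate, and the layer-cake computation with the substitution $t=|f(x)|/\lambda$ producing exactly $[\Phi]_{B_p}$. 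The normalization $\Phi(1)\le\tfrac12$ to make the cutoff land at $1$ in the $t$-integral is exactly the right technical device, and you correctly observe that undoing it costs only a factor depending on $\Phi(1)$ and $p$, both of which are harmless. The finite-measure adjustment via Jensen and the bound $[\Phi]_{B_p}\gtrsim p^{-1}$ (from $\Phi$ increasing and the normalization) is also sound. One small cosmetic point: as stated, the resulting constant is in fact independent of $X$ (the argument never uses the doubling constant, since only the lower bound $\|f\|_{\Phi,Q_j}>\lambda$ is needed, not the upper bound $\le C(X)\lambda$), so your proof actually yields a slightly sharper conclusion than the lemma asserts.
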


\begin{remark}
In \cite{perez95,MR2022366} it is assumed that $\Phi$ satisfies the
doubling condition $\Phi(2t)\leq C\Phi(t)$.  However, as noted
in~\cite[p.~102]{MR2797562} this assumption is only used to prove an
equivalent formulation of the $B_p$ condition.
\end{remark}

\begin{proof}[Proof of Theorem~\ref{thm:double}]  By duality and the
definition of $T^S$, there exists $g\in L^{p'}(u)$, $\|g\|_{L^{p'}(u)}=1$, such that
\begin{align*}
\|T^S(f\sigma)\|_{L^p(u)} 
& = \int_X T^S(f\sigma)(x)u(x)g(x)\,d\mu(x) \\
& \leq  2\sum_{Q\in S} \dashint_Q f(x)\sigma(x)\,d\mu(x) \dashint_Q
u(x)g(x)\,d\mu(x) \cdot \mu(E(Q)) \\
& \leq 8 \sum_{Q\in S} \|f\sigma^{1/p}\|_{\bar{B},Q} \|gu^{1/p'}\|_{\bar{A},Q} 
\|u^{1/p}\|_{A,Q}\|\sigma^{1/p'}\|_{B,Q} \mu(E(Q)) \\
& \leq 8[u,\sigma]_{A,B,p} ^\D\int_X M_{\bar{B}}^\D (f\sigma^{1/p})(x)
M_{\bar{A}}^\D (gu^{1/p'})(x)\,d\mu(x) \\
& \leq  8[u,\sigma]_{A,B,p}^\D \|M_{\bar{B}}^\D (f\sigma^{1/p})\|_p 
\|M_{\bar{A}}^\D (gu^{1/p'})\|_{p'} \\
& \leq C(X)[u,\sigma]_{A,B,p}^\D[\bar{A}]_{B_{p'}}^{1/p'}
[\bar{B}]_{B_p}^{1/p}\|f\|_{L^p(\sigma)} \|g\|_{L^{p'}(u)}.
\end{align*}
\end{proof}

In the next section we will need an equivalent version of this result
for sparse operators,
and so we state it here.  The equivalence is easily seen by letting
$\sigma=v^{1-p'}$. 

\begin{theorem} \label{thm:double-alt}
Given an SHT $(X,\rho,\mu)$, a dyadic grid $\D$ and a sparse family
$S\subset \D$, and Young functions $A,\,B$ with
$\bar{A}\in B_{p'}$ and $\bar{B}\in B_p$, suppose the pair of weights $(u,v)$
satisfies
\begin{equation} \label{eqn:doublt-alt1}
[[u,v]]_{A,B,p}^\D = \sup_{Q\in \D}\|u^{1/p}\|_{A,Q}\|v^{-1/p}\|_{B,Q} < \infty.
\end{equation}
Then 
\[ \|T^Sf\|_{L^p(u)} \leq C(X) [[u,v]]_{A,B,p}^\D\; [\bar{A}]_{B_{p'}}^{1/p'}\;
  [\bar{B}]_{B_{p}}^{1/p}\;
\|f\|_{L^p(v)}. \]
\end{theorem}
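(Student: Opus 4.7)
The plan is to derive this theorem directly from the sparse-operator estimate that was just established in the proof of Theorem~\ref{thm:double} via the standard substitution $\sigma = v^{1-p'}$. Note that the proof of Theorem~\ref{thm:double} really proved the stronger pointwise statement that for any sparse family $S \subset \D$,
\[\|T^S(f\sigma)\|_{L^p(u)} \leq C(X)\,[u,\sigma]_{A,B,p}^\D\,[\bar{A}]_{B_{p'}}^{1/p'}[\bar{B}]_{B_p}^{1/p}\|f\|_{L^p(\sigma)},\]
so the dyadic/sparse version is already in hand; only a rewriting of the weights is needed.

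First I would make the substitution $\sigma = v^{1-p'}$, so that $(1-p')(1-p) = 1$ yields $v = \sigma^{1-p}$ and $\sigma^{1/p'} = v^{-1/p}$. In particular, for every dyadic cube $Q \in \D$, $\|\sigma^{1/p'}\|_{B,Q} = \|v^{-1/p}\|_{B,Q}$, so the bump constants coincide exactly: $[u,\sigma]_{A,B,p}^\D = [[u,v]]_{A,B,p}^\D$. Thus the hypothesis \eqref{eqn:doublt-alt1} translates directly into the hypothesis of Theorem~\ref{thm:double} applied to the pair $(u,\sigma)$.

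Next I would absorb the weight into the test function. Given $f \in L^p(v)$, set $h = f\sigma^{-1} = fv^{p'-1}$, so that $f = h\sigma$. A direct computation gives
\[\|h\|_{L^p(\sigma)}^p = \int_X |f|^p \sigma^{1-p}\,d\mu = \int_X |f|^p v\,d\mu = \|f\|_{L^p(v)}^p,\]
using $\sigma^{1-p} = v$. Applying the sparse-operator estimate extracted from the proof of Theorem~\ref{thm:double} to $h$ then yields
\[\|T^Sf\|_{L^p(u)} = \|T^S(h\sigma)\|_{L^p(u)} \leq C(X)\,[u,\sigma]_{A,B,p}^\D\,[\bar{A}]_{B_{p'}}^{1/p'}[\bar{B}]_{B_p}^{1/p}\|h\|_{L^p(\sigma)},\]
and substituting the two identities above gives precisely the claimed inequality. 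There is essentially no obstacle — the only subtlety is verifying that the bump conditions and the $L^p$ norms transform correctly under $\sigma \leftrightarrow v^{1-p'}$, and both identities are immediate from arithmetic on the exponents.
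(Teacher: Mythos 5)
Your argument is exactly the paper's: the authors state immediately after Theorem~\ref{thm:double} that ``the equivalence is easily seen by letting $\sigma=v^{1-p'}$,'' and your write-up simply carries out that substitution, checking that $\sigma^{1/p'}=v^{-1/p}$ and $\|f\sigma^{-1}\|_{L^p(\sigma)}=\|f\|_{L^p(v)}$. The computation is correct, so this is the same approach, just made explicit.
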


\section{A weak $(1,1)$ inequality}
\label{sec:step4}

In this section we prove a two-weight, weak $(1,1)$ inequality for
sparse operators.  A version of this result for general
Calder\'on-Zygmund operators in the Euclidean case was due to
P\'erez~\cite{perez94b} and our proof closely follows his.  However,
it is simplified because we are working with sparse operators:
instead of appealing to duality and the Coifman-Fefferman inequality
relating singular integrals and the maximal operator, we use
two-weight theory via Theorem~\ref{thm:double-alt}.

\begin{theorem} \label{thm:perez} Given an SHT $(X,\rho,\mu)$, let
  $\D$ be a dyadic grid satisfying the hypotheses of
  Theorem~\ref{dyadic}, and let $S\subset \D$ be sparse.  Let $\Phi$
  be a Young function such that for some $1<q<\infty$,
  $A_\Phi(t)=\Phi(t^q)$ satisfies $\bar{A}_\Phi\in B_{q'}$.  Then for
  all $\lambda>0$,
\begin{equation}
\label{Perezweak}
u(\{x \in X : T^Sf(x)>\lambda\})\leq 
C(X,\Phi,q)\frac{1}{\lambda}\int_X f(x)M_{\Phi}^{\D}u(x)d\mu(x).
\end{equation}
\end{theorem}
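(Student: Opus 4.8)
The plan is to run a Calderón–Zygmund decomposition on $f$ at height $\lambda$, using the dyadic form in Theorem~\ref{thm:CZ-decomp} adapted to the sparse family $S$, and to split $T^S f = T^S g + T^S b$ so that the good part is handled by an $L^q$ estimate and the bad part by the localized structure of $T^S$. First I would apply Theorem~\ref{thm:CZ-decomp} (with the $L^1$ maximal operator $M^\D$) at level $\lambda$ to obtain the cubes $\{Q_j\}$, the good function $g$ with $\|g\|_\infty \le C(X)\lambda$ and $g = f$ off $\Omega_\lambda = \bigcup_j Q_j$, and the bad function $b = \sum_j b_j$ with $b_j$ supported in $Q_j$ and mean zero there. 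Since we only need $u(\{T^S f > \lambda\})$, the usual reduction gives
\[
u(\{T^S f > \lambda\}) \le u(\{T^S g > \lambda/2\}) + u(\{T^S b > \lambda/2\}) + u\Big(\bigcup_j \widehat{Q_j}\Big),
\]
where $\widehat{Q_j}$ is a fixed dilate of $Q_j$ (here a controlled ball containing $Q_j$); the exceptional term is bounded by $C \sum_j u(\widehat{Q_j}) \le C \sum_j \mu(Q_j)\,\dashint_{Q_j} M_\Phi^\D u \le \frac{C}{\lambda}\int_X f\,M_\Phi^\D u$, using $\mu(Q_j) \le \frac{1}{\lambda}\int_{Q_j}|f|$ and $\inf_{Q_j} M_\Phi^\D u \le \dashint_{Q_j} u \le \dashint_{Q_j} M_\Phi^\D u$, together with doubling to pass from $u(\widehat{Q_j})$ to $\mu(Q_j)$ times an average of $u$ over $Q_j$.

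For the good part, by Chebyshev it suffices to bound $\|T^S g\|_{L^q(w)}^q$ for a suitable weight $w$. The natural choice is $w = M_\Phi^\D u$: then Theorem~\ref{thm:double-alt}, applied with exponent $q$, the pair $(w, \sigma)$ for an appropriate second weight, and Young functions built from $\Phi$, gives $\|T^S g\|_{L^q(w)} \le C\,\|g\|_{L^q(\sigma\text{-type weight})}$, where the bump constant $[[\,\cdot\,]]$ is finite precisely because of the $B_q$/$B_{q'}$ hypothesis on $A_\Phi$ and the classical fact (essentially the Fefferman–Stein inequality for $M_\Phi^\D$) that relates averages of $M_\Phi^\D u$ back to $u$ through $M_\Phi^\D$. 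Concretely, one uses that $u \le M_\Phi^\D u$ pointwise is too weak; instead one wants $\|M_\Phi^\D u\|$-type control, so the precise second weight should be chosen so that the good-function estimate collapses to $\frac{C}{\lambda^q}\big(\int f \,M_\Phi^\D u\big)\cdot \lambda^{q-1}$, i.e. using $\|g\|_\infty \le C\lambda$ to trade $q-1$ powers and $\int |g|\,(\text{weight}) \le \int |f|\,M_\Phi^\D u$ for the remaining power. This is exactly the mechanism in Pérez's argument, now with the Coifman–Fefferman step replaced by the two-weight bound of Theorem~\ref{thm:double-alt}.

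For the bad part I would exploit that $T^S$ is \emph{localized}: for each $j$, $T^S b_j = \sum_{Q \in S} \big(\dashint_Q b_j\big)\chi_Q$, and if $Q \subsetneq Q_j$ or $Q \cap Q_j = \emptyset$ the contributions either vanish or telescope, while if $Q \supsetneq Q_j$ then $\dashint_Q b_j = 0$ by the mean-zero property of $b_j$ provided $\int_{Q_j} b_j = 0$ and the average over the larger cube picks up only this integral. Hence $T^S b_j$ is supported in $Q_j$ (up to the cubes strictly below $Q_j$, which are handled inside $Q_j$), so $\{T^S b > \lambda/2\}$ is essentially contained in $\bigcup_j Q_j$, already estimated in the exceptional term. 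The one subtlety is the cubes $Q \in S$ with $Q \subsetneq Q_j$: there $\dashint_Q b_j = \dashint_Q f - \dashint_{Q_j} f$, and one controls $\sum_{Q \subsetneq Q_j} |\dashint_Q b_j|\,\mu(Q)$ using sparseness and the Calderón–Zygmund bound $\dashint_{Q_j} f \le C\lambda$, again giving a term of size $\frac{C}{\lambda}\int_{Q_j} f$ after summing; multiplying by $\inf_{Q_j} M_\Phi^\D u$ and summing in $j$ closes the estimate. The main obstacle I anticipate is the bookkeeping in the good-function step — choosing the auxiliary weight and Young functions so that Theorem~\ref{thm:double-alt} applies with a finite bump constant and the powers of $\lambda$ and the two occurrences of $M_\Phi^\D u$ balance correctly; everything else is a routine transcription of Pérez's Euclidean argument using the SHT tools (Theorem~\ref{dyadic}, Corollary~\ref{cor:LDT-dyadic}, Theorem~\ref{thm:CZ-decomp}) already set up in the paper.
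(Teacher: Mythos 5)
Your skeleton matches the paper's: a Calder\'on--Zygmund decomposition at height $\lambda$, an exceptional-set estimate, a mean-zero argument for the bad part, and a two-weight $L^q$ estimate for the good part. But there is a genuine gap in the good-part step, and a couple of unnecessary complications elsewhere.

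\textbf{The good-part step is not pinned down.} You propose applying Chebyshev in $L^q(w)$ with $w=M_\Phi^\D u$ on the \emph{left}, then casting about for a ``$\sigma$-type weight'' on the right. This is backwards and you flag it yourself (``$u\le M_\Phi^\D u$ is too weak\ldots''). The correct direction is: Chebyshev gives $u(\{x\in\Omega^c:T^Sg>\lambda/2\})\leq \frac{2^q}{\lambda^q}\int_{\Omega^c}(T^Sg)^q\,u\,d\mu$, so $u\chi_{\Omega^c}$ stays on the left and Theorem~\ref{thm:double-alt} is applied with the pair $(u\chi_{\Omega^c},\,M_\Phi^\D(u\chi_{\Omega^c}))$. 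The crux you are missing is \emph{why} that pair has a finite double-bump constant: taking $A_\Phi(t)=\Phi(t^q)$ on the left and $B(t)=t^{(rq)'}$ (with $1/q<r<1$, so $\bar B\in B_q$) on the right, one has $[[u,M_\Phi^\D u]]_{A_\Phi,B,q}^\D\le 1$ simply because $M_\Phi^\D u(x)\ge\|u\|_{\Phi,Q}=\|u^{1/q}\|_{A_\Phi,Q}^q$ for every $x\in Q$, so $\|M_\Phi^\D(u)^{-1/q}\|_{B,Q}\le\|u^{1/q}\|_{A_\Phi,Q}^{-1}$. Without identifying $B$ and this pointwise lower bound, the appeal to Theorem~\ref{thm:double-alt} is not justified. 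After that one uses $g\le C\lambda$ to drop from $g^q$ to $\lambda^{q-1}g$, and then there is a second nontrivial point you omit: splitting $g=f\chi_{\Omega^c}+\sum_jf_{Q_j}\chi_{Q_j}$, the term $\sum_j\int_{Q_j}f_{Q_j}M_\Phi^\D(u\chi_{\Omega^c})$ needs the dyadic fact that for $x\in Q_j$, $M_\Phi^\D(u\chi_{\Omega^c})(x)\le\inf_{y\in Q_j}M_\Phi^\D(u\chi_{Q_j^c})(y)$ (the maximal function restricted to $u\chi_{Q_j^c}$ only sees cubes $Q\supsetneq Q_j$ and is therefore constant on $Q_j$). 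This is what lets you replace $f_{Q_j}$ by an integral of $f$ over $Q_j$ and close the sum; ``multiplying by $\inf_{Q_j}M_\Phi^\D u$ and summing in $j$'' glosses over it.

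\textbf{Unneeded complications.} (i) You introduce dilates $\widehat{Q_j}$ of the $Q_j$, which are appropriate for a genuine singular integral but not here: $T^S$ is purely dyadic, so the exceptional set is just $\Omega=\bigcup_jQ_j$ itself, no dilation needed. (ii) Consequently, once you restrict the bad-part estimate to $\Omega^c$, the ``subtlety'' of cubes $Q\subsetneq Q_j$ evaporates --- those cubes sit inside $\Omega$ and never touch $\Omega^c$. The clean statement is that $T^Sb(x)=0$ for every $x\in\Omega^c$: if $T^Sb_j(x)\ne0$ with $x\notin Q_j$, dyadic nesting forces $Q_j\subsetneq Q$ for the active cube $Q$, and then $\dashint_Qb_j=\mu(Q)^{-1}\int_{Q_j}(f-f_{Q_j})=0$. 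Your paragraph about controlling $\sum_{Q\subsetneq Q_j}$ via sparseness is a detour that is neither needed nor, as written, a complete argument. (iii) Minor but worth flagging: in the exceptional-set estimate you chain $\inf_{Q_j}M_\Phi^\D u\le\dashint_{Q_j}u$, which runs the wrong way; what you actually use is $\dashint_{Q_j}u\lesssim\inf_{Q_j}M_\Phi^\D u$ (averages are dominated by the maximal function).
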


\begin{proof}
We first consider the case when $\mu(X)=\infty$; at the end of the
proof we will sketch the changes needed when $\mu(X)<\infty$.

Fix $\lambda>0$ and let the disjoint cubes $\{Q_j\}$ and functions $g$ and
$b=\sum_j b_j$ be as given by Theorem~\ref{thm:CZ-decomp}.
Since $f=g+b$, we have that
\begin{align*}
& u(\{x\in X : T^Sf(x)>\lambda\}) \\ 
& \qquad =  u(\Omega)
 + u(\{x\in \Omega^c : |T^Sb(x)|>\lambda/2\}) +u(\{x\in \Omega^c :
 T^Sg(x)>\lambda/2\}) \\ 
& \qquad = I_1+I_2+I_3,
\end{align*}
where $\Omega=\bigcup_j Q_j$.

The estimate for $I_1$ is immediate:  since $\mu$ is doubling, by the
properties of the cubes $\{Q_j\}$,
\begin{multline*}
 I_1 = u(\Omega)\leq
 \sum_ju(Q_j)=\sum_j\frac{u(Q_j)}{\mu(Q_j)}\mu(Q_j) 
\leq \frac{1}{\lambda}\sum_j\frac{u(Q_j)}{\mu(Q_j)}\int_{Q_j}f(x)d\mu(x) \\
\leq \frac{1}{\lambda}\sum_j\int_{Q_j}f(x)M^\D u(x) d\mu(x)
 \leq C(\Phi)\frac{1}{\lambda}\int_X f(x) M_{\Phi}^\D u(x) d\mu(x);
\end{multline*}
the last inequality follows from the fact since $t \lesssim \Phi(t)$,
$\|u\|_{1,Q}\leq C(\Phi)\|u\|_{\Phi,Q}$.

\bigskip

To estimate $I_2$, fix $x\in \Omega^c$; then $x\in Q_j^c$
for all $j$.  By linearity,  $T^Sb(x) = \sum_jT^Sb_j(x)$, and for each
$j$, 
\begin{multline*}
T^Sb_j(x) = \sum_{Q\in S}\dashint_Qb_j(y)d\mu (y) \cdot \chi_Q(x) \\
= \sum_{Q\in
  S}\frac{\mu(Q_j)}{\mu(Q)}\frac{1}{\mu(Q_j)}\int_Q(f(y)-f_{Q_j})\chi_{Q_j}(y)d\mu
(y) \cdot \chi_Q(x).
\end{multline*}
For $T^Sb_j(x)\neq 0$, we need $x\in Q$, which in turn implies that
$Q\cap Q_j\neq \emptyset$ and $Q\cap Q_j^c\neq\emptyset$. 
Since $Q,\,Q_j\in \D$, we must have that $Q_j\subset Q$.
But then
 \[\frac{1}{\mu(Q_j)}\int_Q(f(y)-f_{Q_j})\chi_{Q_j}(y)d\mu (y) =
\frac{1}{\mu(Q_j)}\int_{Q_j}\left( f(y)-f_{Q_j}\right) d\mu (y) = 0; \] 
Hence,  $T^Sb_j(x) =0$ and so $I_2=0$.

\bigskip

To estimate $I_3$ we want to apply Theorem~\ref{thm:double-alt} with
the pair $(u,M_\Phi u)$.  Let
$B(t)=t^{(rq)'}$ with $1/q<r<1$; then  $\bar{B}\in B_q$ and
$[\bar{B}]_{B_q} \leq C(q)$.  We claim that
\[ [[u,M_\Phi^\D u]]_{A_\Phi,B,p}^\D \leq 1. \]
To see this, fix $Q\in \D$.  Since $B(1)=1$, it follows that
$\|\chi_Q\|_{B,Q}=1$.  Moreover, for any $x\in Q$, by a change of
variables in the definition of the Orlicz norm,
\[ M_\Phi^\D u(x) \geq \|u\|_{\Phi,Q}
=\|u^{1/q}\|_{A_\Phi,Q}^q. \]
Therefore, 
\[ \|u^{1/p}\|_{A_\Phi,Q}\|M_\Phi^\D (u)^{-1/p}\|_{B,Q} 
\leq 
\|u^{1/p}\|_{A_\Phi,Q}\|u^{1/p}\|_{A_\Phi,Q}^{-1}\|\chi_Q\|_{B,Q}=1. \]

Hence, by Theorem~\ref{thm:double-alt} and since $g(x)\leq C(X)\lambda$,
\begin{align*}
I_3 
& \leq \frac{2^q}{\lambda^q}\int_{{\Omega^c}} T^Sg(x)^q u(x)\,d\mu(x) \\
& \leq C(X,\Phi,q)\;\frac{1}{\lambda^q}\int_X
  g(x)^qM_{\Phi}^\D (u\chi_{{\Omega^c}})(x)d\mu(x) \\
& \leq C(X,\Phi,q)\frac{1}{\lambda}\int_X
g(x)M_{\Phi}^\D (u\chi_{{\Omega^c}})(x)d\mu(x) \\
& = C(X,\Phi,q)\frac{1}{\lambda}\int_{\Omega^c}
 f(x)M_{\Phi}^\D (u\chi_{{\Omega^c}})d\mu(x) \\
& \qquad \qquad  +
 C(X,\Phi,q)\frac{1}{\lambda}\sum_j\int_{Q_j}
 f_{Q_j}M_{\Phi}^\D (u\chi_{{\Omega^c}})(x)d\mu(x) \\
& = J_1+J_2.
\end{align*}

Clearly,
\[ J_1\leq C(X,\Phi,q)\frac{1}{\lambda}\int_X f(x)M_{\Phi}^\D u(x)d\mu(x) \]
as desired.  To estimate $J_2$, assume for the moment that for each $j$ and $x\in Q_j$,
\begin{equation} \label{eqn:perez-weak1}
M_{\Phi}^\D(u\chi_{\Omega^c})(x)  \leq \inf_{y\in
  Q_j}M_{\Phi}^\D (u\chi_{Q_j^c})(y). 
\end{equation}
Given this,  we have that 
\begin{align*}
J_2
& \leq C(X,\Phi,q)\frac{1}{\lambda}\sum_jf_{Q_j}\mu(Q_j)\inf_{y\in
  Q_J}M_{\Phi}^D(u\chi_{Q_j^c})(y) \\
& \leq C(X,\Phi,q)\frac{1}{\lambda}\sum_j
\int_{Q_j}f(y)M_{\Phi}^\D(u\chi_{Q_j^c})(y)d\mu(y) \\
& \leq C(X,\Phi,q)\frac{1}{\lambda}\int_Xf(y)M_{\Phi}^\D u(y)d\mu(y).
\end{align*}

It remains to prove~\eqref{eqn:perez-weak1}.  
But if $x\in Q_j$, then
\[ M_\Phi^\D(u\chi_{{\Omega^c}})(x)  \leq 
M_\Phi^\D (u\chi_{Q_j^c})(x) = 
\sup_{x\in Q\in\D}\| u \chi_{Q_j^c}\|_{\Phi,Q}. \] 
The norm on the right hand side is non-zero only if $x\in Q$ and
$Q\cap Q_j^c \neq \emptyset$.  Therefore, by the properties of
dyadic cubes we must have that $Q\subset Q_j$.    Hence,
\[ M_\Phi^\D (u\chi_{Q_j^c})(x) = 
\sup_{ \substack{Q\in\D\\ Q_j\subset Q}}\| u \chi_{Q_j^c}\|_{\Phi,Q}, \]
and since this quantity is independent of $x\in Q_j$, we get
\eqref{eqn:perez-weak1}.

\bigskip

If $\mu(X)<\infty$, then we can repeat the above proof for all
$\lambda >\dashint_X f(x)\,d\mu(x)$.  If the opposite inequality
holds, then for some dyadic cube $Q$ sufficiently large, $Q=X$, and so
\begin{multline*}
u(\{ x\in X : T^Sf(x) > \lambda \})
\leq u(X) \leq \frac{u(Q)}{\mu(Q)}\frac{1}{\lambda} \int_Q
f(x)\,d\mu(x) \\
\leq \frac{1}{\lambda} \int_Q
f(x)M^\D u(x)\,d\mu(x)
\leq C(\Phi)\frac{1}{\lambda} \int_Q
f(x)M^\D_\Phi u(x)\,d\mu(x).
\end{multline*}
\end{proof}

\section{Proof of Theorems~\ref{thm:main} and~\ref{thm:main-strong}}
\label{sec:step5}

We first show that Theorem~\ref{thm:main-strong} is a consequence of
Theorem~\ref{thm:main}.   Given both separated bump conditions, the
latter result implies that
\begin{gather*} 
\|T^S(\cdot\s)\|_{L^p(\s)\to L^{p,\infty}(u)} 
\lesssim [u,\sigma]_{A,p}, \\
\|T^S(\cdot u)\|_{L^{p'}(u)\to L^{p',\infty}(\s)}
\lesssim [\sigma,u]_{B,p'}.
\end{gather*}
Therefore, by Theorem~\ref{thm:LSUT} we get the desired strong type
inequality. 

\bigskip

To prove Theorem~\ref{thm:main} it will again suffice to prove it for
sparse operators.  In order to do this we need a weighted norm inequality for
an Orlicz maximal operator.  The following result was proved
in~\cite{MR1713140} for the Hardy-Littlewood maximal operator in the
Euclidean case; the proof in a SHT is nearly the same and we sketch
the details. 

\begin{lemma} \label{lemma:bump-max} Given
  $1<p<\infty$, let $A,C$ and $\Phi$ be Young functions such that
  $A^{-1}(t)C^{-1}(t)\leq c_0\Phi^{-1}(t)$ for $t>0$ where 
  and $C\in B_{p'}$.  If $[u,\s]_{A,p}^\D<\infty$, then 
\[ \|M_\Phi^\D( f u)\|_{L^{p'}(\sigma)}
\leq C(X,c_0) [u,\s]_{A,p}[C]_{B_{p'}}^{1/p'}    \|f\|_{L^{p'}(u)}. \]
\end{lemma}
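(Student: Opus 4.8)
The plan is to reduce the weighted bound for $M_\Phi^\D$ to the corresponding unweighted bound for an auxiliary Orlicz maximal operator, which is controlled by Lemma~\ref{lemma:orlicz-max}. First I would dualize: it suffices to show that for every non-negative $h\in L^p(\sigma)$ with $\|h\|_{L^p(\sigma)}=1$,
\[ \int_X M_\Phi^\D(fu)\,h\,\sigma\,d\mu \leq C(X,c_0)[u,\sigma]_{A,p}^\D\,[C]_{B_{p'}}^{1/p'}\,\|f\|_{L^{p'}(u)}. \]
The standard move is to linearize $M_\Phi^\D$: for $\mu$-a.e.\ $x$ pick a cube $Q_x\in\D$ containing $x$ with $M_\Phi^\D(fu)(x) \le 2\|fu\|_{\Phi,Q_x}$, then pass to the maximal (disjoint) cubes $\{Q_j\}$ realizing a fixed level-set decomposition so that the sum over $x$ collapses to a sum over a sparse-type family $\{Q_j\}$ on whose pieces $E(Q_j)$ the value of $M_\Phi^\D(fu)$ is comparable to $\|fu\|_{\Phi,Q_j}$, with $\mu(Q_j)\approx\mu(E(Q_j))$. (This is exactly the Calderón–Zygmund machinery of Theorem~\ref{thm:CZ-cubes}.)

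Next I would apply the three–function generalized Hölder inequality with the hypothesis $A^{-1}(t)C^{-1}(t)\le c_0\Phi^{-1}(t)$, which gives
\[ \|fu\|_{\Phi,Q_j} \le K(c_0)\,\|f u^{1/p}\|_{C,Q_j}\,\|u^{1/p'}\|_{A,Q_j} \]
after writing $fu = (fu^{1/p})\cdot u^{1/p'}$ — wait, one must be slightly careful with exponents; the correct splitting is $fu = (fu^{1/p'})\cdot u^{1/p}$ so that the $A$-norm falls on $u^{1/p}$, matching the definition of $[u,\sigma]_{A,p}^\D$. Then on each $Q_j$,
\[ \|u^{1/p}\|_{A,Q_j}\le [u,\sigma]_{A,p}^\D\,\|\sigma^{1/p'}\|_{p',Q_j}^{-1}, \]
so that summing over $j$ and inserting $\mu(Q_j)\approx\mu(E(Q_j))$ and $\int_{Q_j}h\sigma\,d\mu \le \mu(Q_j)\|h\sigma^{1/p}\|_{p,Q_j}\|\sigma^{1/p'}\|_{p',Q_j}$ (Hölder on $Q_j$), the troublesome factors $\|\sigma^{1/p'}\|_{p',Q_j}$ cancel. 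What is left is a sum of the form
\[ C(c_0)[u,\sigma]_{A,p}^\D \sum_j \|fu^{1/p'}\|_{C,Q_j}\,\|h\sigma^{1/p}\|_{p,Q_j}\,\mu(E(Q_j)) \leq C(c_0)[u,\sigma]_{A,p}^\D \int_X M_C^\D(fu^{1/p'})\cdot M^\D(h\sigma^{1/p})\,d\mu, \]
using disjointness of the $E(Q_j)$ and the pointwise domination of the averages by the relevant maximal operators.

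Finally I would apply Hölder's inequality in $L^{p'}$–$L^p$ and then Lemma~\ref{lemma:orlicz-max}: since $C\in B_{p'}$ we get $\|M_C^\D(fu^{1/p'})\|_{p'}\le C(X)[C]_{B_{p'}}^{1/p'}\|fu^{1/p'}\|_{p'} = C(X)[C]_{B_{p'}}^{1/p'}\|f\|_{L^{p'}(u)}$, while the ordinary dyadic maximal operator is bounded on $L^p(\mu)$ (the case $\Phi(t)=t^p\in B_p$... actually $M^\D$ is bounded on $L^p$ directly, $1<p<\infty$), giving $\|M^\D(h\sigma^{1/p})\|_p\le C(X)\|h\sigma^{1/p}\|_p = C(X)\|h\|_{L^p(\sigma)}=C(X)$. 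Combining the pieces yields the claimed inequality. The main obstacle I anticipate is purely bookkeeping: getting the $1/p$ versus $1/p'$ powers of $u$ and $\sigma$ distributed correctly across the two applications of Hölder (the generalized Orlicz one on each cube and the $L^{p'}$–$L^p$ one at the end) so that the $\|\sigma^{1/p'}\|_{p',Q_j}$ factors genuinely cancel against the $[u,\sigma]_{A,p}^\D$ term; once the exponents are aligned, each individual step is standard, and the SHT setting introduces nothing new beyond invoking Theorem~\ref{thm:CZ-cubes} and Lemma~\ref{lemma:orlicz-max} in place of their Euclidean analogues.
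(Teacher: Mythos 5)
Your overall strategy (Theorem~\ref{thm:CZ-cubes} level-set decomposition of $M_\Phi^\D$, three-function Hölder to split $\|fu\|_{\Phi,Q_j}\le c_0\|fu^{1/p'}\|_{C,Q_j}\|u^{1/p}\|_{A,Q_j}$, then the bump condition, then $M_C^\D$ and Lemma~\ref{lemma:orlicz-max}) matches the paper except for one structural decision, and that decision introduces a genuine gap. After you dualize against $h$ and apply Hölder on each $Q_j$ via $h\sigma=(h\sigma^{1/p})\cdot\sigma^{1/p'}$, the surviving factor from $\int_{Q_j}h\sigma\,d\mu$ is the \emph{$L^p$-average} $\|h\sigma^{1/p}\|_{p,Q_j}$. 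To turn the resulting sum into an integral against a maximal function, you need the operator $M_p^\D g(x)=\sup_{x\in Q\in\D}\|g\|_{p,Q}$, not the ordinary $M^\D$ with $L^1$ averages (indeed $\|g\|_{p,Q}\ge\|g\|_{1,Q}$, so $M^\D$ cannot dominate these averages). But $M_p^\D$ is \emph{not} bounded on $L^p(\mu)$ --- equivalently, $\|M_p^\D g\|_p^p=\|M^\D(|g|^p)\|_1$ and $M^\D$ fails to be bounded on $L^1$ --- so the last step, where you invoke boundedness of ``the ordinary dyadic maximal operator'' on $L^p$, does not apply to the quantity you actually produced. The exponents do not realign simply by shuffling $1/p$ and $1/p'$ around.

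The paper sidesteps this entirely by not dualizing. It works directly with
\[
\|M_\Phi^\D(fu)\|_{L^{p'}(\sigma)}^{p'}
=\sum_k\int_{\Omega_k\setminus\Omega_{k+1}}M_\Phi^\D(fu)^{p'}\sigma\,d\mu
\le a^{p'}\sum_{k,j}\|fu\|_{\Phi,Q_j^k}^{p'}\sigma(Q_j^k),
\]
and then observes that $\sigma(Q_j^k)=\mu(Q_j^k)\,\|\sigma^{1/p'}\|_{p',Q_j^k}^{p'}$, so that after the three-function Hölder step the factor $\bigl(\|u^{1/p}\|_{A,Q_j^k}\|\sigma^{1/p'}\|_{p',Q_j^k}\bigr)^{p'}$ is directly controlled by $([u,\sigma]_{A,p}^\D)^{p'}$ and only $\|fu^{1/p'}\|_{C,Q_j^k}^{p'}\mu(E(Q_j^k))$ remains, which sums to $\int_X M_C^\D(fu^{1/p'})^{p'}\,d\mu$; a single application of Lemma~\ref{lemma:orlicz-max} (using $C\in B_{p'}$) then finishes the proof with no second maximal operator in sight. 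If you want to keep a dual flavor you must use the $\sigma$-weighted dyadic maximal operator $M^\D_\sigma h(x)=\sup_{x\in Q}\sigma(Q)^{-1}\int_Q h\,d\sigma$ (bounded on $L^p(\sigma)$ with $\sigma$-independent constant) rather than $M^\D$ acting on $h\sigma^{1/p}$, but the direct computation is shorter and is what the paper does.
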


\begin{proof}
We first consider the case when $\mu(X)=\infty$.  
By Theorem~\ref{thm:CZ-cubes}, fix $a>1$ sufficiently large and form
the cubes $\{Q_j^k\}$ such that
\[ \Omega_k = \{ x : M_\Phi^\D(fu)(x) > a^k \} = \bigcup_j Q_j^k. \]
Then by the generalized H\"older's inequality  we have that
\begin{align*}
\int_X M_\Phi^\D(fu)(x)^{p'} \sigma(x)\,d\mu(x)
& = \sum_k \int_{\Omega_k\setminus \Omega_{k+1}}
 M_\Phi^\D(fu)(x)^{p'} \sigma(x)\,d\mu(x) \\
& \leq a^{p'} \sum_{k,j} \|fu\|_{\Phi,Q_j^k}^{p'} \sigma(Q_j^k) \\
& \leq C(X,c_0) \sum_{k,j} \|fu^{1/p'}\|_{C,Q_j^k}^{p'}
\|u^{1/p}\|_{A,Q_j^k}^{p'} \|\sigma^{1/p'}\|_{p',Q_j^k}^{p'} E(Q_j^k) \\
& \leq C(X,c_0)[u,\sigma]_{A,p}^{p'}\sum_{k,j} 
\int_{E(Q_j^k)} M_C^\D (fu^{1/p'}(x)^{p'})\,d\mu(x) \\
& \leq C(X,c_0)[u,\sigma]_{A,p}^{p'}
\int_{X} M_C^\D (fu^{1/p'}(x)^{p'})\,d\mu(x) \\
& \leq C(X,c_0) [u,\sigma]_{A,p}^{p'} [C]_{B_{p'}}
\int_X f(x)^{p'}u(x)\,d\mu(x).
\end{align*}

\bigskip

If $\mu(X)<\infty$, then let $k_0$ be the largest integer such that
\[ a^{k_0} < \|f\|_{X,\Phi}. \]
Then $\Omega_{k_0}=X$.  We can repeat the above argument summing over
$k\geq k_0$, and for $k>k_0$ we can still form the cubes $\{Q_j^k\}$
and argue as before.  When $k=k_0$, then there exists a large dyadic cube
$Q=X$. Hence, $a^{k_0}< \|f\|_{\Phi,Q}$ and the argument proceeds as
before, replacing the collection $\{Q_j^k\}$ with the single cube $Q$.
\end{proof}

We can now prove Theorem~\ref{thm:main}.  Note that this is the only
part of the proof in which we use the fact that $A$ is a log bump.
The proof uses an extrapolation argument from~\cite{MR1713140}; see
also~\cite[Chapter 8]{MR2797562}.

Fix $\lambda>0$ and define 
\[\Omega_{\lambda} = \{x\in X : T^S(f\s)(x)>\lambda\}. \] 
Then by duality, there exists $h\in L^{p'}(u)$, $\|h\|_{L^{p'}(u)}=1$, such
that 
\[u(\Omega_{\lambda})^{1/p} = \|\chi_{\Omega_\lambda}\|_{L^p(u)}
= \int_{\Omega_{\lambda}}u(x)h(x)d\mu(x)  =uh(\Omega_\lambda).  \] 

Now let $\Phi(t)=t\log(e+t)^\epsilon$, where we will fix the value of $\epsilon>0$
below.  Let $q-1=\epsilon/2$ and let
$A_\Phi(t)=t^q\log(e+t)^{q-1+\epsilon/2}$.  Then $\bar{A}_\Phi \in B_{q'}$,
and so by Theorem~\ref{thm:perez},
\begin{multline*}
uh(\Omega_\lambda) \leq C(X,\epsilon)
\frac{1}{\lambda}\int_Xf(x)\s(x)\, M_{\Phi}^\D(uh)(x)d\mu(x)  \\
\leq C(X,\epsilon)\frac{1}{\lambda} \|f\|_{L^p(\s)}
\|M_\Phi^\D(uh)\|_{L^{p'}(\sigma)}.
\end{multline*}

Now fix $\epsilon<\delta/p$ and define
$C(t)=t^{p'}\log(e+t)^{-1-(p'-1)\eta}$, where $\eta=\delta-\epsilon
p$.  Then $C\in B_{p'}$ and $[C]_{B_{p'}}$ depends only on $p$ and
$\delta$.  Moreover, we have that 
$A^{-1}(t)C^{-1}(t) \leq c_0 \Phi^{-1}(t)$, where the constant $c_0$
depends on $\delta$ and $p$.  (See~\cite{MR1713140} for details.) 
Therefore, by
Lemma~\ref{lemma:bump-max},
\[ \|M_\Phi^\D(hu)\|_{L^{p'}(\sigma)} \leq
C(X,p,\delta)[u,\sigma]_{A,p}\|h\|_{L^{p'}(u)} =
C(X,p,\delta)[u,\sigma]_{A,p}. \]
Combining the above inequalities we get the desired result.  

\section{Separated and double bump conditions}
\label{section:example}

We construct our example on the real line with $p=2$.  Our example can
be readily modified to work for other values of $p$.  Define the Young functions
\[ A(t)=B(t)= t^2 \log(e+t)^2.  \]
Then $\bar{A},\,\bar{B} \in B_2$. By rescaling, if we let
$\Phi(t)=t\log(e+t)^2$, then for any pair $(u,\sigma)$, 
\[ \|u^{1/2}\|_{A,Q} \approx \|u\|_{\Phi,Q}^{1/2}, \qquad
\|\s^{1/2}\|_{B,Q} \approx \|\s\|_{\Phi,Q}^{1/2}. \]
Therefore, it will suffice  estimate the norms of $u$ and $\sigma$ with respect
to $\Phi$.   Similarly, we can replace the localized $L^2$ norms of
$u^{1/2}$ and $\sigma^{1/2}$ with the $L^1$ norms of $u$ and $\sigma$.

Before we define $u$ and $\sigma$ we first construct a pair
$(u_0,\sigma_0)$ which will be the basic building block for our
example.  Fix an integer $n\geq 2$ and  define
$Q=(0,n)$, $\sigma_0=\chi_{(0,1)}$ and   $u_0 = K_n\chi_{(n-1,n)}$,
where $K_n=n^2\log(e+n)^{-3}$.   Then a straightforward estimate with
the definition of the Orlicz norm snows that 
\[ \|u_0\|_{1,Q} = \frac{K_n}{n}, \;
\|u_0\|_{\Phi,Q} \approx \frac{K_n\log(e+n)^2}{n},  \qquad 
\|\sigma_0\|_{1,Q} = \frac{1}{n}, \; 
\|\sigma_0\|_{\Phi,Q} \approx\frac{\log(e+n)^2}{n}. \]
Therefore, we have that 
\[  \|u_0\|_{1,Q}\|\sigma_0\|_{\Phi,Q}, \quad
\|u_0\|_{\Phi,Q}\|\sigma_0\|_{1,Q} \approx \frac{1}{\log(e+n)}, \]
but
\[ \|u_0\|_{\Phi,Q}\|\sigma_0\|_{\Phi,Q} \approx \log(e+n). \]

\medskip

We now define $u$ and $\sigma$ as follows:
\[ 
u(x) = \sum_{n\geq 2} K_n\chi_{I_n}(x) \quad
\sigma(x) = \sum_{n\geq 2} \chi_{J_n}(x).
\]
where $I_n=(e^n+n-1,e^n+n)$ and $J_n= (e^n,e^n+1)$.
Since the above computations are translation invariant, we immediately
get that if $Q_n=(e^n,e^n+n)$, then 
\[ \|u\|_{\Phi,Q_n}\|\sigma\|_{\Phi,Q_n} \approx \log(e+n), \]
and so $[u,\sigma]_{A,B,2}=\infty$.   

It remains, therefore, to show
that $[u,\sigma]_{A,2}$ and $[\sigma,u]_{B,2}$ are both finite.  We
will consider $[u,\sigma]_{A,2}$; the argument for the second is
essentially the same.    Fix an interval $Q$; we will show that
$\|u\|_{\Phi,Q}\|\sigma\|_{1,Q}$  is uniformly bounded.   Fix an integer $N$ such that 
$N-1 \leq |Q| \leq N$.   We need to consider those values of $n$ such
that $Q$ intersects either $I_n$ or $J_n$.  

Suppose that for some $n\geq N+2$, $Q$ intersects $I_n$.  But in this
case it cannot intersect $J_k$ for any $k$ and so
$\|\sigma\|_{1,Q}=0$.  Similarly, if $Q$ intersects $J_n$, then
$\|u\|_{\Phi,Q}=0$. 

Now suppose that for some $n<N+2$, $Q$ intersects one of $I_n$ or
$J_n$.  If $\log(N) \lesssim n$ (more precisely, if
$N<e^n-e^{n-1}-1$), then for any $k\neq n$, $Q$ cannot intersect $I_k$
or $J_k$.  In this case 
$\|u\|_{\Phi,Q}\|\sigma\|_{1,Q}\neq 0$ only if $Q$ intersects both
$I_n$ and $J_n$, and will reach its maximum when $N\approx n$.  But in
this case we can 
replace $Q$ by $(e^n,e^n+n)$ and the above computation shows
that $\|u\|_{\Phi,Q}\|\sigma\|_{1,Q}\lesssim 1$.

Finally, suppose $Q$ intersects one or more pairs $I_n$ and $J_n$ with
$n \lesssim \log(N)$.   Then $|\supp(u)\cap Q|\lesssim \log(N)$ and 
$\|u\|_{L^\infty(Q)} \approx K_{\lfloor\log(N)\rfloor} \lesssim \log(N)^2$.  Therefore,
\[ \|u\|_{\Phi,Q} \lesssim \|u\|_{2,Q} \leq 
\|u\|_{L^\infty(Q)} \left(\frac{|\supp(u)\cap Q|}{|Q|}\right)^{1/2}
\lesssim \frac{\log(N)^{5/2}}{N^{1/2}}. \]
A similar calculation shows that
\[ \|\sigma \|_{1,Q} \lesssim \frac{\log(N)}{N}; \]
hence, we again have that  $\|u\|_{\Phi,Q}\|\sigma\|_{1,Q}\lesssim 1$.
It follows that $[u,\sigma]_{A,2}<\infty$ and our
proof is complete.

\begin{remark}l
If we modify our example by defining $K_n= n^2\log(e+n)^{-2}$, then
the same argument shows that $(u,\sigma)$ satisfy the separated bump
condition when $A(t)=B(t)=t^2\log(e+t)^{1+\delta}$, $0<\delta<2$, but
do not satisfy the double bump condition for any $\delta>0$.  It would
be of interest to construct a pair that satisfies a separated bump
condition for some pair of log bumps but fails to satisfy the double
bump condition for any pair of Young functions for which the
appropriate $B_p$ conditions hold.
\end{remark}

\bibliographystyle{plain}
\bibliography{SHT-bump}

\end{document}